\theoremstyle{plain}\newtheorem{theorem}{Theorem}
\begin{document}

%%%%%%%%%%%%%%%%%%%%%%%%%%%%%%%%%%%%%%%%%%%%%%%%%%%%%%%%%%%%%%%%%%%%%%%%%%%%%%%%
% PAPER INFORMATION.                                                           %
%%%%%%%%%%%%%%%%%%%%%%%%%%%%%%%%%%%%%%%%%%%%%%%%%%%%%%%%%%%%%%%%%%%%%%%%%%%%%%%%

\title{Using \texorpdfstring{$\boldsymbol{LDL^{\trans}}$}{LDL\_T}
  factorizations in Newton's method for solving general large-scale algebraic
  Riccati equations}

\author[$\ast$]{Jens Saak}
\affil[$\ast$]{Max Planck Institute for Dynamics of Complex Technical
  Systems, Sandtorstra{\ss}e 1, 39106 Magdeburg, Germany.\authorcr%
  \email{saak@mpi-magdeburg.mpg.de}, \orcid{0000-0001-5567-9637}}% chktex 8

\author[$\dagger$]{Steffen W. R. Werner}
\affil[$\dagger$]{Department of Mathematics and
    Division of Computational Modeling and Data Analytics,
    Academy of Data Science, Virginia Tech,
    Blacksburg, VA 24061, USA.\authorcr%
  \email{steffen.werner@vt.edu}, \orcid{0000-0003-1667-4862}}% chktex 8

\shorttitle{\texorpdfstring{$LDL^{\trans}$}{LDL\_T} factorizations
  for solving general CAREs}
\shortauthor{J. Saak, S.~W.~R. Werner}
\shortdate{2024-08-30}
\shortinstitute{}

\keywords{%
  Riccati equation,
  Newton's method,
  large-scale sparse matrices,
  low-rank factorization,
  indefinite terms
}

\msc{%
  15A24, % Matrix equations and identities
  49M15, % Newton-type methods
  65F45, % Numerical methods for matrix equations
  65H10, % Numerical computation of solutions to systems of equns. (nonlinear)
  93A15  % Large-scale systems
}

\abstract{%
 Continuous-time algebraic Riccati equations can be found in many disciplines
in different forms.
In the case of small-scale dense coefficient matrices, stabilizing solutions
can be computed to all possible formulations of the Riccati equation.
This is not the case when it comes to large-scale sparse coefficient matrices.
In this paper, we provide a reformulation of the Newton-Kleinman iteration
scheme for continuous-time algebraic Riccati equations using indefinite
symmetric low-rank factorizations.
This allows the application of the method to the case of
general large-scale sparse coefficient matrices.
We provide convergence results for several prominent realizations of the
equation and show in numerical examples the effectiveness of the approach.

}

\novelty{%
  In this work, we use indefinite symmetric low-rank factorizations to
  extend the Newton-Kleinman iteration to general Riccati equations with
  large-scale sparse coefficient matrices.
  We provide a convergence theory for several prominent realizations of the
  equation and investigate different scenarios numerically.
}

\maketitle

%%%%%%%%%%%%%%%%%%%%%%%%%%%%%%%%%%%%%%%%%%%%%%%%%%%%%%%%%%%%%%%%%%%%%%%%%%%%%%%%
% INTRODUCTION.                                                                %
%%%%%%%%%%%%%%%%%%%%%%%%%%%%%%%%%%%%%%%%%%%%%%%%%%%%%%%%%%%%%%%%%%%%%%%%%%%%%%%%

\section{Introduction}%
\label{sec:intro}

The solutions to continuous-time algebraic Riccati equations (CAREs) play
essential roles for many concepts in systems and control theory.
They occur, for example, in the design of optimal and robust regulators for
dynamical processes~\cite{AndM90, LanR95, Loc01, Son98},
model order reduction methods for dynamical systems~\cite{BenS17, DesP82,
JonS83, OpdJ88},
network analysis~\cite{AndV72} or
applications with differential games~\cite{BasM17, Del07}.
In general, CAREs are quadratic matrix equations of the form
\begin{equation} \label{eqn:riccati}
  A^{\trans} X E + E^{\trans} X A + C^{\trans} Q C
    - {\left( B^{\trans} X E + S^{\trans} \right)}^{\trans} R^{-1}
    \left( B^{\trans} X E + S^{\trans} \right) = 0,
\end{equation}
with $A, E \in \R^{n \times n}$,
$B, S \in \R^{n \times m}$,
$C \in \R^{p \times n}$,
$Q = Q^{\trans} \in \R^{p \times p}$ and
$R = R^{\trans} \in \R^{m \times m}$ invertible.
For simplicity of illustration, we present the proposed algorithm and results
for the case that $E$ is invertible;
however, we outline modifications for the case of non-invertible $E$ matrices
in \Cref{sec:daes}.
Among all the solutions to~\cref{eqn:riccati}, the one of particular interest
in most cases is the stabilizing solution, here denoted as
$X_{\ast} \in \R^{n \times n}$, for which it holds that the eigenvalues of the
generalized matrix pencil
\begin{equation*}
  \lambda E - (A - B R^{-1} (B^{\trans} X_{\ast} E + S^{\trans}))
\end{equation*}
lie in the open left complex half-plane.
Matrix pencils with such an eigenvalue structure are also referred to as
Hurwitz.

In the case of dense coefficient matrices of small dimension $n$, a variety
of different approaches has been established for the numerical solution
of~\cref{eqn:riccati}.
Direct methods can be used to construct the solution via an eigenvalue
decomposition of the underlying Hamiltonian or even matrix
pencils~\cite{AmmBM93, ArnL84, Lau79}.
On the other hand, iterative approaches such as the matrix sign function
iteration and structure-preserving doubling avoid the eigendecomposition and
aim directly for the computation of the eigenspaces of
interest~\cite{BenEQetal14, ChuFL05, Kim89, Rob80}.
Other iterative approaches construct sequences of matrices that
converge to the stabilizing solution~\cite{Kle68, LanFAetal08, San74}.

With the problem dimension $n$ increasing, the task of
solving~\cref{eqn:riccati} becomes more complicated.
Even if in those cases $A$ and $E$ typically become sparse, the stabilizing
solution $X_{\ast}$ of~\cref{eqn:riccati} must be expected to
be densely populated.
Thus, the demands on computational resources such as time and memory become
infeasible when computing $X_{\ast}$ via classical approaches for
$n \in \mathcal{O}(10^{5})$ and larger.
Under the assumption that the dimensions of the factored coefficients
in~\cref{eqn:riccati} are significantly smaller than the solution dimension,
i.e., $p, m \ll n$, new iterative approaches for the solution
of~\cref{eqn:riccati} have been developed for some particular
realizations.
The key ingredient in all instances is the use of low-rank factorized
approximations of the solution $X_{\ast}$, typically given as
$Z_{\ast} Z_{\ast}^{\trans} \approx X_{\ast}$, where
$Z_{\ast} \in \R^{n \times \ell}$ and $\ell \ll n$.
This is justified by a fast singular value decay of the
solutions~\cite{BenB16, Sti18}.

For the special case that $S = 0$, $Q$ is symmetric positive semi-definite and
$R$ is symmetric positive definite, a variety of new approaches has been
developed in recent years.
Methods like the Newton and Newton-Kleinman iterations have been
extended~\cite{BenHSetal16, BenLP08} employing yet another low-rank solver such
as the low-rank alternating direction implicit (LR-ADI) method~\cite{BenKS13,
BenKS14, BenLP08, LiW02} for the Lyapunov equations occurring in every Newton
step.
Projection-based methods construct approximating subspaces to project the
coefficients of~\cref{eqn:riccati} onto smaller dimension and then solve
small-scale Riccati equations with classical dense
approaches~\cite{HeyJ09, Sim16}.
The Riccati alternating direction implicit (RADI)
method~\cite{BenBKetal18, BerF24} and the incremental low-rank subspace
iteration (ILRSI)~\cite{LinS15} are among the most successful low-rank solvers
for this variant of the Riccati equation.
We refer the reader to~\cite{BenBKetal20, BenS13, Kue16} for general overviews
and numerical comparisons of these methods.

In other instances of~\cref{eqn:riccati}, the amount of established methods
decreases significantly.
In the case of $Q$ symmetric positive semi-definite and $R$ symmetric negative
definite, only extensions of the Newton and Newton-Kleinman iteration have been
proposed for large-scale sparse systems~\cite{BenS14}.
Recently, a new low-rank method has been developed in~\cite{BenHW23} that
allows to compute the solution to~\cref{eqn:riccati} with indefinite $R$
and $Q$ symmetric positive semi-definite matrices.
Under the assumption that the stabilizing solution $X_{\ast}$ is symmetric
positive semi-definite, this new low-rank method iteratively approximates
$X_{\ast}$ via accumulating solutions to classical Riccati equations with
positive definite $R$ terms.

In this work, we are lifting all restrictions and investigate the numerical
approximation of the stabilizing solution to the general
CARE~\cref{eqn:riccati}.
Therefore, we focus on the Newton-Kleinman method~\cite{Kle68} and extend this
approach to the case of large-scale sparse coefficient matrices by utilizing an
indefinite symmetric low-rank factorization of the stabilizing solution.
We show that this new approach generalizes existing methods and provide a
theoretical background for several of the practically occurring scenarios.
The theoretical analysis is supported by multiple numerical experiments.

Throughout this paper, $A^{\trans}$ denotes the transpose of the matrix $A$.
Also, we denote symmetric positive (semi-)definite matrices $A$ % chktex 36
by $A > 0$ ($A \geq 0$) and we write $A > B$ ($A \geq B$) if $A - B$ is
symmetric positive (semi-)definite. % chktex 36
Similarly, we use $A < 0$ ($A \leq 0$) to denote symmetric negative
(semi-)definite matrices and write $A < B$ ($A \leq B$) if $A - B$ is % chktex 36
symmetric negative (semi-)definite. % chktex 36
Moreover, $\inner{., .}$ denotes the Frobenius inner product, i.e.,
$\inner{A, B} = \trace{A^{\trans}B}$ for real matrices $A$ and $B$ of compatible
dimensions.
By $I_{n}$ we denote the $n$-dimensional identity matrix.

The remainder of this paper is organized as follows:
In \Cref{sec:eqns}, we provide an overview about different realizations
of the continuous-time algebraic Riccati equation from the literature with their
motivational background and how they fit into the presented general
formulation~\cref{eqn:riccati}.
In \Cref{sec:newtonkleinman}, we derive the Newton-Kleinman formulation
for~\cref{eqn:riccati} based on which we extend the approach to the large-scale
sparse setting.
Afterwards, we provide a theoretical analysis of the convergence behavior,
formulas for an exact line search procedure in the Newton iteration and an
extension of the method to non-invertible $E$ matrices.
Numerical experiments to support the theoretical discussions of this paper
are conducted in \Cref{sec:numerics}.
The paper is concluded in \Cref{sec:conclusions}.

%%%%%%%%%%%%%%%%%%%%%%%%%%%%%%%%%%%%%%%%%%%%%%%%%%%%%%%%%%%%%%%%%%%%%%%%%%%%%%%%
% EQUATIONS FROM THE LITERATURE.                                               %
%%%%%%%%%%%%%%%%%%%%%%%%%%%%%%%%%%%%%%%%%%%%%%%%%%%%%%%%%%%%%%%%%%%%%%%%%%%%%%%%

\section{Example equations from the literature}%
\label{sec:eqns}

Several realizations of CAREs are displayed throughout the literature.
The form~\cref{eqn:riccati} we consider in this work appears to be the most
general formulation of the CARE with factorized terms that allow for low-rank
approximations in the large-scale sparse setting.
Some of the most prominent realizations are outlined in the following.
These will also serve as examples in the numerical experiments in
\Cref{sec:numerics}.

%%%%%%%%%%%%%%%%%%%%%%%%%%%%%%%%%%%%%%%%%%%%%%%%%%%%%%%%%%%%%%%%%%%%%%%%%%%%%%%%

\subsection{Linear-quadratic regulator problems}

First, we may consider the CARE formulation given in~\cref{eqn:riccati}.
With the additional assumptions that $Q \geq 0$ and $R > 0$, this realization
can be found in optimal control for the construction of optimal state-feedback
regulators~\cite{AndM90, Loc01, Son98}.
The corresponding optimization problem is given by
\begin{subequations}\label{eqn:optcontrol}
\begin{align} \label{eqn:optcontrol_a}
  & \min\limits_{u~\text{stab.}}
    \int\limits_{0}^{\infty} {y(t)}^{\trans} Q y(t)
    + {x(t)}^{\trans} S u(t) + {u(t)}^{\trans} R u(t) \operatorname{d}t\\
  \nonumber
  & \text{subject to}\\ \label{eqn:optcontrol_b}
  & \hspace{\baselineskip}
    \begin{aligned}
      E \dot{x}(t) & = A x(t) + B u(t), \\
      y(t) & = C x(t).
    \end{aligned}
\end{align}
\end{subequations}
The task is to find a controller $u$ that solves the optimization
problem~\cref{eqn:optcontrol_a} while stabilizing the corresponding dynamical
system~\cref{eqn:optcontrol_b}.
Assume that a stabilizing solution $X_{\ast}$ to~\cref{eqn:riccati} exists, then
the solution to~\cref{eqn:optcontrol} is given by
$u(t) = K_{\ast} x(t)$, where the feedback matrix is given by
$K_{\ast} = R^{-1} (B^{\trans} X_{\ast} E + S^{\trans})$.
If the matrix pencil $\lambda E - A$ is stabilizable with respect to $B$ and
observable with respect to $C$, then a sufficient condition for the existence
of the stabilizing solution $X_{\ast}$ is that
\begin{equation*}
  \begin{bmatrix} C^{\trans} Q C & S \\ S^{\trans} & R \end{bmatrix} \geq 0
\end{equation*}
holds; see~\cite{LanR95}.
Note that under the assumptions above, the stabilizing solution $X_{\ast}$ is
known to be positive semi-definite.

%%%%%%%%%%%%%%%%%%%%%%%%%%%%%%%%%%%%%%%%%%%%%%%%%%%%%%%%%%%%%%%%%%%%%%%%%%%%%%%%

\subsection{Linear-quadratic-Gaussian control and unstable model order
  reduction}

A different realization of~\cref{eqn:riccati} relates to the construction of
optimal controllers and model order reduction of unstable dynamical systems.
Consider the modified optimal regulator problem
\begin{align*}
  & \min\limits_{u~\text{stab.}}
    \int\limits_{0}^{\infty} {y(t)}^{\trans} \tQ y(t)
    + {u(t)}^{\trans} \tR u(t) \operatorname{d}t\\
  & \text{subject to}\\
  & \hspace{\baselineskip}
    \begin{aligned}
      E \dot{x}(t) & = A x(t) + B u(t), \\
      y(t) & = C x(t) + D u(t),
    \end{aligned}
\end{align*}
with the feed-through matrix $D \in \R^{p \times m}$,
$\tQ \geq 0$ and $\tR > 0$.
The corresponding CARE, whose stabilizing solution provides the
optimal stabilizing control, is given by
\begin{equation} \label{eqn:lqgcare}
  A^{\trans} X E + E^{\trans} X A + C^{\trans} \tQ C -
    {\left(B^{\trans} X E + D^{\trans} C\right)}^{\trans}
    {\left( \tR + D^{\trans} D \right)}^{-1}
    \left(B^{\trans} X E + D^{\trans} C\right) = 0.
\end{equation}
The equation~\cref{eqn:lqgcare} can be rewritten as~\cref{eqn:riccati} by
setting
\begin{equation*}
  Q = \tQ, \quad
  R = \tR + D^{\trans} D, \quad
  S = D^{\trans} C.
\end{equation*}
The very same equation~\cref{eqn:lqgcare} can also be found in the design of
optimal linear-quadratic Gaussian (LQG) controllers and in the LQG
balanced truncation method that is used for the computation of reduced-order
dynamical systems with unstable dynamics~\cite{BenS17, JonS83}.

%%%%%%%%%%%%%%%%%%%%%%%%%%%%%%%%%%%%%%%%%%%%%%%%%%%%%%%%%%%%%%%%%%%%%%%%%%%%%%%%

\subsection{\texorpdfstring{$\boldsymbol{\Hinf}$}{H-infinity} control and
  robust model order reduction}

Another realization related to controller design and model order reduction comes
in the form of the $\Hinf$-Riccati equation
\begin{equation} \label{eqn:hinfcare}
  A^{\trans} X E + E^{\trans} X A + C^{\trans} \tQ C - E^{\trans} X \left(
    B_{2} \tR^{-1} B_{2}^{\trans} -
    \frac{1}{\gamma^{2}} B_{1} B_{1}^{\trans}
    \right) X E = 0;
\end{equation}
see~\cite{BenHW22, BenHW23, MusG91}.
This equation is typically associated with dynamical systems of the form
\begin{align*}
  E \dot{x}(t) & = A x(t) + B_{1} w(t) + B_{2} u(t), \\
  y(t) & = C x(t),
\end{align*}
where $B_{1} \in \R^{n \times m_{1}}$ models the influence of external
disturbances on the control problem and $B_{2} \in \R^{n \times m_{2}}$ are the
actual control inputs.
The dimensions of $B_{1}$ and $B_{2}$ are related to~\cref{eqn:riccati} via
$m = m_{1} + m_{2}$.
The matrices $\tR > 0$ and $\tQ \geq 0$ are weighting matrices from the
associated optimal control problem similar to~\cref{eqn:optcontrol_a},
and $\gamma > 0$ is the robustness margin that is achieved by the constructed
regulator/controller.
Equations of the form~\cref{eqn:hinfcare} can be rewritten
into~\cref{eqn:riccati} via
\begin{equation*}
  B = \begin{bmatrix} B_{1} & B_{2} \end{bmatrix}, \quad
  Q = \tQ, \quad
  R = \begin{bmatrix} -\gamma^{2} I_{m_{1}} & 0 \\ 0 & \tR \end{bmatrix}, \quad
  S = 0.
\end{equation*}
In this case, the quadratic weighting term $R$ in~\cref{eqn:riccati} is
indefinite.
As in the previous examples, one is interested in the stabilizing solution
$X_{\ast}$ to~\cref{eqn:hinfcare}, which might be indefinite, here, due to the
indefiniteness of $R$.

%%%%%%%%%%%%%%%%%%%%%%%%%%%%%%%%%%%%%%%%%%%%%%%%%%%%%%%%%%%%%%%%%%%%%%%%%%%%%%%%

\subsection{Passivity, contractivity and spectral factorizations}

As last examples, we would like to mention two equations that are related
to dynamical system properties such as contractivity and passivity as well as
spectral factorizations of rational
functions~\cite{BenS17, DesP82, Gre88, OpdJ88}.
The so-called bounded-real Riccati equation is given as
\begin{equation} \label{eqn:brcare}
  A^{\trans} X E + E^{\trans} X A + C^{\trans} C +
    {\left( B^{\trans} X E + D^{\trans} C \right)}^{\trans}
    {\left( \gamma^{2} I_{m} - D^{\trans} D \right)}^{-1}
    \left( B^{\trans} X E + D^{\trans} C \right) = 0,
\end{equation}
with $D \in \R^{p \times m}$ and $\gamma > \lVert H \rVert_{\Hinf}$, where
$\lVert . \rVert_{\Hinf}$ denotes the $\Hinf$ Hardy norm~\cite{Sty06} and
$H(s) = C {(s E - A)}^{-1} B + D$ is a rational function in the complex
variable $s \in \C$.
On the other hand, the positive-real Riccati equation reads
\begin{equation} \label{eqn:prcare}
  A^{\trans} X E + E^{\trans} X A + {\left( B^{\trans} X E - C \right)}^{\trans}
    {\left( D^{\trans} + D \right)}^{-1}
    \left( B^{\trans} X E - C \right) = 0,
\end{equation}
where the dimensions satisfy $m = p$.
Equation~\cref{eqn:brcare} can be rewritten as~\cref{eqn:riccati} by choosing
\begin{equation*}
  Q = I_{p}, \quad
  R = -\left( \gamma^{2} I_{m} - D^{\trans} D \right), \quad
  S = C^{\trans} D,
\end{equation*}
and equation~\cref{eqn:prcare} can be reformulated using
\begin{equation*}
  Q = 0, \quad
  R = -\left( D^{\trans} + D \right), \quad
  S = C^{\trans}.
\end{equation*}
With the assumptions above, the $R$ matrix is symmetric negative definite in
both cases, while $Q$ is either symmetric positive definite or $0$.
Again only the stabilizing solutions $X_{\ast}$ to~\cref{eqn:brcare}
or~\cref{eqn:prcare} are of any interest.
Despite the changed definiteness of $R$ here, the stabilizing solutions are
positive semi-definite~\cite{BenS14}.

%%%%%%%%%%%%%%%%%%%%%%%%%%%%%%%%%%%%%%%%%%%%%%%%%%%%%%%%%%%%%%%%%%%%%%%%%%%%%%%%
% NEWTON.                                                                      %
%%%%%%%%%%%%%%%%%%%%%%%%%%%%%%%%%%%%%%%%%%%%%%%%%%%%%%%%%%%%%%%%%%%%%%%%%%%%%%%%

\section{Low-rank inexact Newton-Kleinman iteration with line search}%
\label{sec:newtonkleinman}

In this section, we derive the low-rank Newton-Kleinman iteration and provide
the formulas for inexact steps, a line search approach and projected Riccati
equations.

%%%%%%%%%%%%%%%%%%%%%%%%%%%%%%%%%%%%%%%%%%%%%%%%%%%%%%%%%%%%%%%%%%%%%%%%%%%%%%%%

\subsection{Derivation of the low-rank Newton-Kleinman scheme}%
\label{sec:derivation}

Solving the CARE~\cref{eqn:riccati} is a root-finding problem with a nonlinear
matrix-valued equation and solution.
Therefore, Newton's method is a valid approach to compute a solution to the
problem~\cite{San74}, and it has been shown in many cases that the computed
solution is the desired stabilizing one.
The basic method can be derived by considering the Fr{\'e}chet derivative of
the Riccati operator
\begin{equation} \label{eqn:careoper}
  \cR(X) = A^{\trans} X E + E^{\trans} X A + C^{\trans} Q C
      - {\left( B^{\trans} X E + S^{\trans} \right)}^{\trans} R^{-1}
      \left( B^{\trans} X E + S^{\trans} \right),
\end{equation}
with respect to the unknown $X$.
The first Fr{\'e}chet derivative of~\cref{eqn:careoper} with respect to $X$ and
evaluated in $N$ is given by
\begin{equation*}
  \cR'(X)(N) = {\left( A - B R^{-1} (B^{\trans} X E + S^{\trans})
    \right)}^{\trans} N E + E^{\trans} N {\left( A - B R^{-1} (B^{\trans} X E +
    S^{\trans}) \right)},
\end{equation*}
and the second Fr{\'e}chet derivative with respect to $X$ evaluated in $N_{1}$
and $N_{2}$ is independent of $X$ and can be written as
\begin{equation*}
  \cR''(X)(N_{1}, N_{2}) = -E^{\trans} N_{1} B R^{-1} B^{\trans} N_{2} E -
    E^{\trans} N_{2} B R^{-1} B^{\trans} N_{1} E.
\end{equation*}
As outlined in~\cite{BenHSetal16}, the classical Newton approach is
usually undesired in the case of large-scale sparse coefficients when
compared to the reformulation given by the Newton-Kleinman scheme~\cite{Kle68}.
Either method is based on the solution of a Lyapunov equation in every iteration
step.
However, while the classical Newton method computes an update to the current
iterate of the form $X_{k+1} = X_{k} + N_{k}$, where $N_{k}$ is given as the
solution to a Lyapunov equation, the Newton-Kleinman method computes $X_{k+1}$
directly as the solution of the Lyapunov equation that is given by
\begin{equation*}
  \cR'(X_{k})(X_{k+1}) = \cR'(X_{k})(X_{k}) - \cR(X_{k});
\end{equation*}
see, for example,~\cite{BenHSetal16}.
Therefore, the Newton-Kleinman approach for~\cref{eqn:riccati} is given by
solving the following Lyapunov equation
\begin{equation} \label{eqn:lyap_old}
  A_{k}^{\trans} X_{k + 1} E + E^{\trans} X_{k + 1} A_{k}
    + C^{\trans} Q C + K_{k}^{\trans} R K_{k} - S K_{k} - {(S K_{k})}^{\trans}
    = 0
\end{equation}
in every iteration step, with $A_{k} = A - B K_{k}$ and
$K_{k} = R^{-1}(B^{\trans} X_{k} E + S^{\trans})$,
and starting with some initial stabilizing feedback $K_{0}$;
see~\cite{Arn84, ArnL84}.
This $K_{0}$ is chosen such that all eigenvalues of
$\lambda E - (A - B K_{0})$ lie in the open left half-plane.

To extend the scheme~\cref{eqn:lyap_old} to the large-scale sparse setting,
we must first observe that the part of the equation that does not contain
the current unknown $X_{k+1}$, in other words, the constant term is an
indefinite symmetric matrix.
To utilize this form of the constant term, similar to the argumentation
in~\cite{LanMS14}, we propose to approximate the solution matrix
to~\cref{eqn:riccati} by a symmetric indefinite low-rank factorization of the
form
\begin{equation} \label{eqn:ldlt}
  L D L^{\trans} \approx X,
\end{equation}
where $L \in \R^{n \times \ell}$ and $D \in \R^{\ell \times \ell}$ symmetric.
By the low-rank structure of the constant term coefficient matrices
of~\cref{eqn:riccati}, as well as its quadratic terms  (since $m, p \ll n$),
we expect the solution to have numerically a low rank such that $\ell \ll n$
holds~\cite{BenB16,Sti18}.
Rewriting the constant term of~\cref{eqn:lyap_old} into the same shape
as~\cref{eqn:ldlt} yields
\begin{align} \nonumber
  & C^{\trans} Q C + K_{k}^{\trans} R K_{k} - S K_{k} - {(S K_{k})}^{\trans}\\
  \label{eqn:ldltrhs_old}
  & = \begin{bmatrix} C^{\trans} & K_{k}^{\trans} & S \end{bmatrix}
    \begin{bmatrix}
      Q & 0 & 0 \\ 0 & R & -I_{m} \\ 0 & -I_{m} & 0
    \end{bmatrix}
    \begin{bmatrix} C \\ K_{k} \\ S^{\trans} \end{bmatrix},
\end{align}
where the center matrix has the dimension $2m + p$.
It is possible to avoid the switching term (the two negative identities) in the
lower right corner of the center matrix in~\cref{eqn:ldltrhs_old} by making the
following reformulations:
\begin{align} \nonumber
  & C^{\trans} Q C + K_{k}^{\trans} R K_{k} - S K_{k} - {(S K_{k})}^{\trans} \\
  \nonumber
  & = C^{\trans} Q C + K_{k}^{\trans} R K_{k} - S K_{k} - {(S K_{k})}^{\trans}
    - \underbrace{S R^{-1} S^{\trans} + S R^{-1} S^{\trans}}_{=\,0}\\
  \nonumber
  & =  C^{\trans} Q C - S R^{-1} S^{\trans}
    + {(K_{k} - R^{-1} S^{\trans})}^{\trans} R (K_{k} - R^{-1} S^{\trans}) \\
  \label{eqn:ldltrhs}
  & = \begin{bmatrix} C^{\trans} & (R^{-1} S^{\trans})^{\trans} &
    {(K_{k} - R^{-1} S^{\trans})}^{\trans} \end{bmatrix}
    \begin{bmatrix} Q & 0 & 0 \\ 0 & -R & 0 \\ 0 & 0 & R \end{bmatrix}
    \begin{bmatrix} C \\ R^{-1} S^{\trans} \\ K_{k} - R^{-1} S^{\trans}
    \end{bmatrix}.
\end{align}
The reformulation in~\cref{eqn:ldltrhs} also has $2m + p$ as inner dimension
of the factors and features a block diagonal center matrix, which we believe to
be advantageous in the implementation.
Plugging~\cref{eqn:ldltrhs} into~\cref{eqn:lyap_old} yields the final
$LDL^{\trans}$-factorized Lyapunov equation that we employ in our new
Newton-Kleinman iteration.
The resulting method is summarized in \Cref{alg:newton}.
Lyapunov equations such as in \Cref{alg:newton_lyap} of \Cref{alg:newton} can
be efficiently solved, for example, via the $LDL^{\trans}$-factorized
low-rank ADI method in~\cite{LanMS14}.

\begin{algorithm}[t]
  \SetAlgoHangIndent{1pt}
  \DontPrintSemicolon%
  \caption{$LDL^{\trans}$-factorized low-rank Newton-Kleinman iteration.}%
  \label{alg:newton}

  \KwIn{Matrices $A, B, S, C, Q, R, E$ from~\cref{eqn:riccati}, stabilizing
    feedback $K_{0}$ such that $\lambda E - (A - B K_{0})$ is Hurwitz.}
  \KwOut{Approximation $L_{k} D_{k} L_{k}^{\trans} \approx X_{\ast}$ to the
    stabilizing solution of~\cref{eqn:riccati}.}

  Initialize $k = 0$,
    \begin{equation*}
      V^{\trans} = R^{-1} S^{\trans} \quad \text{and} \quad
      T = \begin{bmatrix} Q & 0 & 0 \\ 0 & -R & 0 \\ 0 & 0 & R
        \end{bmatrix}.
    \end{equation*}\;
    \vspace{-\baselineskip}

  \While{not converged}{
     Construct the residual term
      \begin{equation*}
        W_{k} = \begin{bmatrix} C \\ V^{\trans} \\ K_{k} - V^{\trans}
          \end{bmatrix}.
      \end{equation*}
      \vspace{-\baselineskip}\;

    Solve the Lyapunov equation
      \begin{equation*}
        A_{k}^{\trans} X_{k+1} E + E^{\trans} X_{k+1} A_{k} +
          W_{k}^{\trans} T W_{k} = 0,
      \end{equation*}
      for $L_{k+1} D_{k+1} L_{k+1}^{\trans} \approx X_{k+1}$ and where
      $A_{k} = A - B K_{k}$.\;%
      \label{alg:newton_lyap}

    Update the feedback matrix \(K_{k+1} = R^{-1} \left(B^{\trans}
      (L_{k + 1} D_{k + 1} L_{k + 1}^{\trans}) E + S^{\trans}\right)\).\;

    Increment $k \gets k + 1$.
  }
\end{algorithm}

%%%%%%%%%%%%%%%%%%%%%%%%%%%%%%%%%%%%%%%%%%%%%%%%%%%%%%%%%%%%%%%%%%%%%%%%%%%%%%%%

\subsection{An equivalent reformulation via low-rank updates}

The efficient handling of the low-rank-updated operator
$A_{k} = A - B K_{k}$ in the Lyapunov equation in \Cref{alg:newton_lyap}
of \Cref{alg:newton} is essential for computing its solution in the large-scale
sparse case.
Typically, linear systems of equations with $A_{k}$ need to be solved, which
can be effectively implemented using the Sherman-Morrison-Woodbury matrix
inversion formula or the augmented matrix approach; see, for
example,~\cite{GolV13}.
Since the handling of such operators is already implemented in most software
for the solution of matrix equations such as the M-M.E.S.S.
library~\cite{BenKS21}, we may use a reformulation of~\cref{eqn:riccati} to
hide the $S$ inside the other matrices.
First, we observe that by multiplying out the terms in~\cref{eqn:riccati}, we
obtain the equivalent CARE
\begin{align} \nonumber
  & {\left( A - B R^{-1} S^{\trans} \right)}^{\trans} X E +
    E^{\trans} X \left( A - B R^{-1} S^{\trans} \right) +
    \left( C^{\trans} Q C - S R^{-1} S^{\trans} \right)\\
  \label{eqn:riccati_tmp}
  & \quad{}-{} E^{\trans} X B R^{-1} B^{\trans} X E = 0.
\end{align}
After some renaming of the terms in~\cref{eqn:riccati_tmp}, we obtain
\begin{equation} \label{eqn:riccati2}
  \hA^{\trans} X E + E^{\trans} X \hA + \hC^{\trans} \hQ \hC
    - E^{\trans} X B R^{-1} B^{\trans} X E = 0,
\end{equation}
where
\begin{equation} \label{eqn:extcoef}
  \hA = A + U V^{\trans}, \quad
  U = -B, \quad
  V = S R^{-\trans}, \quad
  \hC = \begin{bmatrix} C \\ S^{\trans} \end{bmatrix}, \quad
  \hQ = \begin{bmatrix} Q & 0 \\ 0 & -R^{-1} \end{bmatrix}.
\end{equation}
Running \Cref{alg:newton} for the renamed matrices $\hA$, $B$, $\hS = 0$, $\hC$,
$\hQ$, $R$ and $E$ will yield exactly the same iterates computed in every step,
while hiding the original $S$ term in $\hA$ and $\hC$. Note here that all the
corresponding stabilizing feedbacks $\hK_{k}$ are changed such that
$\lambda E - (\hA - B\hK_{k})$ is stabilized rather than
$\lambda E - (A - B K_{k})$.  In particular, the initial stabilizing feedback
must be chosen correctly.  On the other hand, the final stabilizing feedback
$\hK_{k_{\max}}$, corresponding to the final iterate $\hX_{k_{\max}}$, can
easily be modified to stabilize the true associated matrix pencil
$\lambda E - A$ via
\begin{equation*}
  K_{k_{\max}} = \hK_{k_{\max}} + V^{\trans}.
\end{equation*}

While the two formulations~\cref{eqn:riccati,eqn:riccati2} are
equivalent, employing the Newton-Kleinman method for~\cref{eqn:riccati2}
rather than~\cref{eqn:riccati} is expected to be mildly more expensive in the
general case because the dimension of the constant term stays unchanged
while the dimension of the low-rank updates in $A_{k}$ are increased by $m$
columns.
More precisely, when implementing the Newton-Kleinman method for~\cref{eqn:riccati2}, the matrix $A_{k} = A - B K_{k}$ in \Cref{alg:newton}
changes to
\begin{equation*}
  \hA_{k} = A + U V^{\trans} - B \hK_{k} = A +\begin{bmatrix} U & -B
    \end{bmatrix} \begin{bmatrix}  V & \hK_{k} \end{bmatrix}^{\trans}.
\end{equation*}
It now has two low-rank updates, with \(m\) columns each, that can be rewritten
into one low-rank update with $2m$ columns, instead of only $m$ columns, as
before.
This $\hA_{k}$ can be handled similarly to the original $A_{k}$ in the solver
of the Lyapunov equation without explicitly forming the dense matrix $\hA_{k}$;
however this step becomes more expensive since the low-rank update has more
columns.
On the other hand, considering the example equations from \Cref{sec:eqns}, we
can also see a reduction in computational costs using~\cref{eqn:riccati2}.
In
\makeatletter
\@ifclassloaded{preprint}{\Cref{eqn:lqgcare,eqn:brcare,eqn:prcare},}{%
equations~\cref{eqn:lqgcare,eqn:brcare,eqn:prcare},}
\makeatother
the $S$ term is a multiplication of the constant term $C$ with some
appropriately sized matrix $D$.
This fact allows us some additional dimension reduction of the constant term
in~\cref{eqn:riccati2}.
As example, consider the case of the LQG CARE in~\cref{eqn:lqgcare}:
The constant term in~\cref{eqn:riccati2} can be written as

\begin{equation*}
  \resizebox{.99\textwidth}{!}{%
  \ensuremath{\hC^{\trans}\hQ\hC=
    C^{\trans}\tQ C - C^{\trans} D {(\tR + D^{\trans} D)}^{-1} D^{\trans} C
  = C^{\trans}\underbrace{\left(\tQ - D {(\tR + D^{\trans} D)}^{-1}
    D^{\trans}\right)}_{= \check{Q}} C
  = C^{\trans} \check{Q} C.}}
\end{equation*}

In such cases, the size of the constant term in~\cref{eqn:riccati2} can be
reduced from $2m + p$ to $m + p$, which improves the performance of large-scale
sparse solvers that build the solution using the constant term.
We have implemented this version of \Cref{alg:newton} for the reformulated
CARE~\cref{eqn:riccati2} in the M-M.E.S.S. library~\cite{BenKS21} for our
numerical experiments due to the easy integration into the existing framework
of CAREs of the form~\cref{eqn:riccati2}.

\begin{remark}
  Projection-type methods such as the extended and rational Krylov subspace
  methods~\cite{HeyJ09, Sim16} naturally construct the solution to Riccati
  equations~\cref{eqn:riccati} in the same symmetric indefinite low-rank
  factorized form~\cref{eqn:ldlt} that we propose here for the Newton-Kleinman
  method.
  Let $\cV \in \R^{n \times r}$ be a basis matrix of a suitable $r$-dimensional
  projection space, in projection methods, the stabilizing solution
  to~\cref{eqn:riccati} is approximated via
  $\cV \bX_{\ast} \cV^{\trans} \approx X_{\ast}$, where
  $\bX_{\ast} \in \R^{r \ \times r}$ is the stabilizing solution to
  the projected Riccati equation
  \begin{equation} \label{eqn:projriccati}
    \bA^{\trans} \bX \bE + \bE^{\trans} \bX \bA + \bC^{\trans} Q \bC
      - {\left( \bB^{\trans} X \bE + \bS^{\trans} \right)}^{\trans} R^{-1}
      \left( \bB^{\trans} X \bE + \bS^{\trans} \right) = 0,
  \end{equation}
  where $\bA = \cV^{\trans} A \cV \in \R^{r \times r}$,
  $\bE = \cV^{\trans} E \cV \in \R^{r \times r}$,
  $\bB = \cV^{\trans} B \in \R^{r \times m}$,
  $\bS = \cV^{\trans} S \in \R^{r \times m}$ and
  $\bC = C \cV \in \R^{p \times r}$.
  We note that in the case of Krylov subspace methods, which construct the
  projection space adaptively using the matrix pencil $\lambda E - A$ and the
  constant term, it is necessary to make use of the extended constant term
  $\begin{bmatrix} C^{\trans} & S \end{bmatrix}$ from~\cref{eqn:extcoef}
  rather than $C$ alone. Moreover, careful verification that the new
  definiteness assumptions do not break the residual formulas would be
  necessary.

  Besides the careful handling of basis extensions to ensure numerical
  stability, projection methods for Riccati equations are rather difficult to
  analyze in terms of their convergence behavior and, in particular, the
  solvability of the projected Riccati equation~\cref{eqn:projriccati}.
  This has been done only recently for the standard equation case
  ($S = 0, Q \geq 0, R > 0$) under rather
  restrictive assumptions in~\cite{ZhaFC20} and we will not consider these
  types of methods in the remainder of this manuscript.
\end{remark}

\begin{remark}
   While the previous remark was concerned with direct application of the
   projection method to the ARE, it would also be possible to integrate our
   proposed Newton loop with the extended Krylov subspace method (EKSM) along
   the lines of~\cite{Pal19}. Then, only projected Lyapunov equations need to be
   solved in every step, whose solvability is easier to decide. However,
   verification that the same recycling of the EKSM spaces across the Newton
   steps is still possible and careful implementation is beyond the scope of
   this manuscript.
   Further, the RADI method~\cite{BenBKetal18}, could be extended to
   AREs~(\ref{eqn:riccati}). Its equivalence to Krylov projection methods for
   the case ($S = 0, Q \geq 0, R > 0$) would need to be verified for the new
   situation. While first numerical experiments with RADI for \(S=0\)
   look promising, results need further research and will be reported
   separately.
\end{remark}
%%%%%%%%%%%%%%%%%%%%%%%%%%%%%%%%%%%%%%%%%%%%%%%%%%%%%%%%%%%%%%%%%%%%%%%%%%%%%%%%

\subsection{Convergence results}

In the following theorem, we collect the convergence results for two distinct
cases of~\cref{eqn:riccati}, depending on the definiteness of the quadratic
term.
The results are formulated for the exact iterates $X_{k}$ of the
Newton-Kleinman iteration in \Cref{alg:newton} rather than the low-rank
approximations $L_{k} D_{k} L_{k}^{\trans}$ since the
approximation errors introduced by the numerical solution of the Lyapunov
equations and truncation of the solution factors to small numerical ranks may
render the results wrong.
However, for accurate enough approximations, the results remain true in
practice.

\begin{theorem}\label{thm:convergence}
  Assume~\cref{eqn:riccati} has a unique stabilizing solution
  $X_{\ast}$, let $K_{0}$ be a feedback matrix such that the eigenvalues of
  $\lambda E - (A - B K_{0})$ lie in the open left complex half-plane and
  let either $R > 0$ or $R < 0$ be true.
  Then, for the exact solutions $X_{k} = L_{k} D_{k} L_{k}^{\trans}$ to
  the Lyapunov equations in \Cref{alg:newton_lyap} of \Cref{alg:newton},
  it holds that
  \begin{itemize}
    \item[(i)] the closed-loop pencils $\lambda E - A_{k}$ with
      $A_{k} = A - B K_{k}$ are stable for all $k \geq 0$,
    \item[(ii)] $\lim\limits_{k \to \infty} X_{k} = X_{\ast}$ and
      $\lim\limits_{k \to \infty} \calR(X_{k}) = 0$,
    \item[(iii)] the iterates $X_{k}$ converge globally and quadratic to
      $X_{\ast}$,
    \item[(iv)] if $R > 0$, then
      \begin{equation*}
        X_{1} \geq \cdots \geq X_{k} \geq X_{k + 1} \geq \cdots \geq X_{\ast},
      \end{equation*}
      and if $R < 0$, then
      \begin{equation*}
        X_{1} \leq \cdots \leq X_{k} \leq X_{k + 1} \leq \cdots \leq X_{\ast}.
      \end{equation*}
  \end{itemize}
\end{theorem}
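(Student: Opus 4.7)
The strategy is to adapt Kleinman's classical convergence argument to the two sign cases for $R$ and to the presence of the cross term $S$. The engine of the proof is an algebraic identity obtained by Taylor expanding the (quadratic) Riccati operator $\cR$ around $X_k$, which is exact at second order:
\[
  \cR(X_{k+1}) = \cR(X_k) + \cR'(X_k)(X_{k+1} - X_k)
    + \tfrac12 \cR''(X_{k+1}-X_k, X_{k+1}-X_k).
\]
Substituting the Newton-Kleinman equation $\cR'(X_k)(X_{k+1}) = \cR'(X_k)(X_k) - \cR(X_k)$ together with the closed form of $\cR''$ from \Cref{sec:derivation} yields the cornerstone identity
\[
  \cR(X_{k+1}) = -E^{\trans}(X_{k+1}-X_k) B R^{-1} B^{\trans}(X_{k+1}-X_k) E,
\]
whose sign is opposite to that of $R$. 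This is what drives both monotonicity and stability below.

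I would prove (i) and (iv) jointly by induction on $k$. The base case is exactly the hypothesis on $K_0$. Given stability of $\lambda E - A_k$, the Lyapunov equation in \Cref{alg:newton_lyap} has a unique solution $X_{k+1}$. Combining the cornerstone identity with the closed-loop form
\[
  \cR(X) = A_K^{\trans} X E + E^{\trans} X A_K + W(X)^{\trans} T W(X),
\]
available by construction in \Cref{alg:newton} with $K = R^{-1}(B^{\trans}XE + S^{\trans})$, and subtracting the equations at two consecutive iterates produces a Lyapunov equation for $X_{k+1}-X_k$ with coefficient $A_k$ and with right-hand side $-E^{\trans}(X_k - X_{k-1})BR^{-1}B^{\trans}(X_k-X_{k-1})E$, whose sign is opposite to that of $R$. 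Stability of $A_k$ and standard Lyapunov theory then give $X_{k+1}\le X_k$ if $R>0$ and the reverse if $R<0$ from $k\ge 1$ on. The same device applied to $X_{k+1}-X_\ast$, using $\cR(X_\ast)=0$ and the closed-loop form centered at $X_\ast$, yields $X_{k+1} \ge X_\ast$ (respectively $\le X_\ast$), completing (iv). Finally, rereading the cornerstone identity as a Lyapunov equation for $A_{k+1}$ (via the closed-loop form evaluated at $X_{k+1}$), the definiteness of $X_{k+1} - X_\ast$ just established together with a detectability argument forces $\lambda E - A_{k+1}$ to be Hurwitz and closes the induction for (i).

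With (i) and (iv) in hand, (ii) is almost immediate: a monotone and Loewner-bounded sequence of symmetric matrices converges to some $X_\infty$; continuity of $\cR$ and the cornerstone identity force $\cR(X_\infty) = 0$; passing (i) to the limit makes $\lambda E - (A - BK_\infty)$ stable (at worst in the closed left half-plane), and uniqueness of the stabilizing solution identifies $X_\infty = X_\ast$. Claim (iii) then follows from the standard Newton-Kantorovich theorem applied to $\cR$: the operator is polynomial, so $\cR''$ is bounded and in fact constant; $\cR'(X_\ast)$ is invertible precisely because $\lambda E - A_\ast$ is Hurwitz; and the global convergence just established guarantees that the iterates eventually lie in a neighborhood of $X_\ast$ on which the Kantorovich quadratic estimate is valid.

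The main obstacle I foresee is the careful bookkeeping of the cross term $S$ and of the sign of $R$ in the closed-loop identities of the induction step. The completion of squares in~\cref{eqn:ldltrhs} replaces $K_k$ by the shifted feedback $K_k - R^{-1}S^{\trans}$, so the sign structure of the right-hand side and of the iterates does not match Kleinman's textbook setting verbatim. The cleanest route is to first perform the reduction to the $\hat S = 0$ equivalent CARE~\cref{eqn:riccati2}, run the classical Kleinman argument there, and translate the conclusions back via $K_k = \hat K_k + V^{\trans}$. This also absorbs the fact that in the $R<0$ case, which covers~\cref{eqn:hinfcare} and related problems, the standard semi-definiteness assumptions on the Lyapunov right-hand side must be flipped and reinterpreted as the corresponding negative semi-definiteness statements.
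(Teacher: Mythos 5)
Your proposal is correct in substance, but it takes a genuinely different route from the paper. The paper's proof is a two-line reduction: for $R > 0$ it simply invokes the known convergence theory of~\cite{Arn84} (which already covers the cross term $S$), and for $R < 0$ it passes to the equivalent $\hat S = 0$ formulation~\cref{eqn:riccati2}, writes the quadratic term with $\hR = -R > 0$ so that $B\hR^{-1}B^{\trans} \geq 0$, checks that $\hK_{0} = K_{0} - R^{-1}S^{\trans}$ is stabilizing for the transformed pencil, and then cites~\cite[Thm.~3.2]{BenS14}. You instead reconstruct the underlying Kleinman-type argument from scratch: the exact second-order Taylor identity $\cR(X_{k+1}) = -E^{\trans}N_{k}BR^{-1}B^{\trans}N_{k}E$, the induction coupling stability of $A_{k}$ with the Loewner ordering, and Newton--Kantorovich for the quadratic rate. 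This is exactly the machinery inside the cited references (the paper itself displays the integral representation of $X_{k}-X_{k+1}$ right after the proof as the ``main observation'' of those works), so your argument buys self-containedness at the cost of re-proving known results; amusingly, your closing remark that the cleanest path is to reduce to~\cref{eqn:riccati2} and run the classical argument there is precisely what the paper does for $R<0$. Two spots in your sketch deserve tightening if written out in full: the stability step for $A_{k+1}$ should not lean on detectability (which is not among the hypotheses and is the wrong notion when $C^{\trans}QC - SR^{-1}S^{\trans}$ is indefinite) but rather on the assumed existence of the stabilizing $X_{\ast}$, via the Lyapunov identity for $X_{k+1}-X_{\ast}$ and an eigenvector argument showing any closed-right-half-plane eigenvalue of $A_{k+1}$ would also be one of $A_{\ast}$; and the identification $X_{\infty}=X_{\ast}$ in (ii) needs the standard maximality (resp.\ minimality) characterization of the stabilizing solution, since the limit is a priori only almost stabilizing.
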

\begin{proof}
  The results have been proven for the case $R > 0$ in~\cite{Arn84}.
  In the case of $R < 0$, we may use the convergence results
  from~\cite[Thm.~3.2]{BenS14}, which are based on earlier results
  from~\cite{Ben97b, Var95b}.
  Therefore, we consider the equivalent reformulation of the CARE into
  the classical form~\cref{eqn:riccati2}.
  Since $R < 0$, it holds that $-R > 0$ and therefore, we may
  write~\cref{eqn:riccati2} as
  \begin{equation*}
    \hA^{\trans} X E + E^{\trans} X \hA + \hC^{\trans} \hQ \hC
    + E^{\trans} X B \hR^{-1} B^{\trans} X E = 0,
  \end{equation*}
  where $\hR = -R > 0$.
  With $\hK_{0} = K_{0} - R^{-1} S^{\trans}$, the matrix pencil
  $\lambda E - (\hA - B \hK_{0})$ is stable and since $\hR > 0$, we
  have that $B \hR^{-1} B^{\trans} \geq 0$.
  Thus, the assumptions of~\cite[Thm.~3.2]{BenS14} are satisfied, which proofs
  the results of this theorem.
\end{proof}

\Cref{thm:convergence} shows that the general convergence behavior of
\Cref{alg:newton} changes with the definiteness of the quadratic term.
The other terms $C$, $Q$ and $S$ only affect the definiteness of the stabilizing
solution $X_{\ast}$ to which the method converges.
Beyond the convergence theory, the reformulations made in \Cref{sec:derivation}
and in the proof of \Cref{thm:convergence} show that in exact arithmetic,
the proposed \Cref{alg:newton} provides exactly the same iterates as the
Newton-Kleinman methods developed in~\cite{Arn84, BenS14}.

The techniques used to show all the convergence results in the proofs
in~\cite{Arn84, BenS14} are based on the main observation that the difference of
two consecutive steps in the Newton-Kleinman scheme is given as the unique
solution of the Lyapunov equation
\begin{equation*}
  X_{k} - X_{k+1} = \int\limits_{0}^{\infty}
    {\left(e^{(A E^{-1} - B K_{k}) t} \right)}^{\trans}
    {(K_{k - 1}  - K_{k})}^{\trans} R (K_{k - 1}  - K_{k})
     e^{(A E^{-1} - B K_{k}) t} \operatorname{d}t.
\end{equation*}
The definiteness of the difference $X_{k} - X_{k+1}$ depends thereby on the
definiteness of the $R$ matrix resulting in the monotonic convergence behavior
described in \Cref{thm:convergence}.
In the case of indefinite $R$, this monotonic behavior is likely to be lost as
we will demonstrate later in \Cref{sec:numerics}.
However, we were not able to construct a case for which a stabilizing solution
$X_{\ast}$ exists while the Newton-Kleinman method (\Cref{alg:newton}) diverges
or converges to the wrong solution.
In fact, \Cref{alg:newton} only diverged in experiments in which there was no
stabilizing  $X_{\ast}$.
This indicates that \Cref{alg:newton} may always converge to the correct
solution; however, most of the convergence results in \Cref{thm:convergence}
will not be true anymore for the case of indefinite $R$.

A different approach that provides a convergence theory for the case of
indefinite $R$ is the Riccati iteration~\cite{BenHW23, LanFAetal08}.
This method has been designed to solve Riccati equations with
indefinite quadratic terms of the form
\begin{equation} \label{eqn:riccati_indef}
  A^{\trans} X E + E^{\trans} X A + C^{\trans} C - E^{\trans} X \left(
    B_{2} B_{2}^{\trans} - B_{1} B_{1}^{\trans} \right) X E = 0.
\end{equation}
Note that this Riccati equation~\cref{eqn:riccati_indef} is contained in the
general case~\cref{eqn:riccati} that we consider here.
The method iterates on stabilizing solutions to Riccati equations with
positive semi-definite quadratic terms of the form
\begin{equation} \label{eqn:ri_care}
  A_{k}^{\trans} N_{k + 1} E +
    E^{\trans} N_{k + 1} A_{k}
    - E^{\trans} N_{k + 1} B_{2} B_{2}^{\trans}
    N_{k + 1} E
    + (E^{\trans} N_{k} B_{1})
    (E^{\trans} N_{k} B_{1})^{\trans} = 0,
\end{equation}
with $A_{k} = A - (B_{2} B_{2}^{\trans} - B_{1} B_{1}^{\trans})
X_{k}^{\operatorname{RI}} E$ and where the iterates are then given via
accumulation such that
$X_{k + 1}^{\operatorname{RI}} = X_{k}^{\operatorname{RI}} + N_{k + 1}$.
For the complete Riccati iteration, including the first initialization step,
implementational reformulations and convergence theory see~\cite{BenHW23}.
Under the additional assumption that $X_{\ast} \geq 0$ holds, the iterates
constructed by the Riccati iteration converge monotonically towards
$X_{\ast}$ as
\begin{equation*}
  X_{0}^{\operatorname{RI}} \leq \cdots \leq X_{k}^{\operatorname{RI}} \leq
    X_{k + 1}^{\operatorname{RI}} \leq X_{\ast}.
\end{equation*}
Since each of these iterates is computed via a CARE of the
form~\cref{eqn:ri_care} with $R > 0$, this overall iteration scheme can be
interpreted as splitting the two opposing convergence behaviors
for~\cref{eqn:riccati_indef} in \Cref{thm:convergence} into an inner and
an outer iteration.
Similar to the Newton methods, the Riccati iteration provides global quadratic
convergence.
The main difference to the results in \Cref{thm:convergence} is that the
closed-loop matrix pencils constructed in the outer loop of the iteration are
not guaranteed to be stable such that additional stabilization might be needed
to employ an inner CARE solver in the large-scale sparse case.

%%%%%%%%%%%%%%%%%%%%%%%%%%%%%%%%%%%%%%%%%%%%%%%%%%%%%%%%%%%%%%%%%%%%%%%%%%%%%%%%

\subsection{Inexact Newton with exact line search}

Newton's method with exact line search has first been discussed for dense
generalized algebraic Riccati equations in~\cite{BenB98}.
Based on this work, Weichelt et al.~\cite{BenHSetal16, Wei16} formulated an
inexact low-rank Newton-ADI method with exact line search, focusing on the
representation of solutions in the form $X \approx Z Z^{\trans}$.
Since, in this work, we are pointing out advantages of the
$X\approx LDL^{\trans}$ representation, we provide the required formulas in
this context and show that they can also be evaluated at low cost.

To this end, we may call the $k$-th and $(k+1)$-st classic Newton-Kleinman
iterates $\Xold$ and $\Xnew$ and note that they are connected via the step
matrix $\Nk$, since $\Xnew = \Xold + \Nk$.
Further, we denote the $(k+1)$-st iterate after line search with the resulting
step size $\xi_{k}$ as $\Xlsk = \Xold + \xi_{k} \Nk$.
In~\cite[Chap.~6]{Wei16}, using earlier results from~\cite{Ben97,BenB98} for the
dense case, the
author shows that the dependence on the step size $\xi_{k}$ of the squared
Riccati residual norm, in the $k$-th Newton step,
forms a quartic polynomial
\begin{equation}\label{eqn:poly}
  \begin{aligned}
    f_{\cR,k}(\xi) & = \nrm[F]{\cR(\Xls)}^{2}\\
    & = {(1-\xi)}^{2}v_{1}^{(k)} + \xi^{2}v_{2}^{(k)} + \xi^{4} v_{3}^{(k)}
    + 2\xi(1-\xi)v_{4}^{(k)} - 2\xi^{2}(1-\xi)v_{5}^{(k)} - 2\xi^{3}v_{6}^{(k)}.
  \end{aligned}
\end{equation}
The coefficients are expressed in terms of the norms of the Riccati residual
and its derivatives evaluated in the above quantities, and expressed in low-rank
form.
In the context of the equations investigated here, these terms become
\begin{equation*}
  \begin{aligned}
    v_{1}^{(k)} & = \nrm[F]{\cR(\Xold)}^{2}
      = \trace{{(\Uk \Dk {\Uk }^{\trans})}^{2}}
      = \trace{{({\Uk }^{\trans}\Uk \Dk )}^{2}},\\
    v_{2^{(k)}} & = \nrm[F]{\cL(\Xnew)}^{2}
      = \trace{{(F_{k+1}G_{k+1}{F_{k+1}}^{\trans})}^{2}}
      = \trace{{({F_{k+1}}^{\trans}F_{k+1}G_{k+1})}^{2}},\\
    v_{3}^{(k)} & = \nrm[F]{\frac{1}{2}\cR''(\Xnew)(\Nk, \Nk)}^{2}
      = \nrm[F]{E^{\trans}\Nk BR^{-1}B^{\trans}\Nk E}^{2}\\
    & = \trace{{(\dKnew R\dKnew^{\trans})}^{2}}
      = \trace{{({\dKnew}^{\trans}\dKnew R)}^{2}},\\
    v_{4}^{(k)} & = \inner{\cR(\Xold),\cL(\Xnew)}
      = \trace{\Uk \Dk {\Uk }^{\trans} F_{k+1}G_{k+1}{F_{k+1}}^{\trans}}\\
    & = \trace{{F_{k+1}}^{\trans}\Uk \Dk \Uk^{\trans} F_{k+1}G_{k+1}},\\
    v_{5}^{(k)} & = \inner{\cR(\Xold),\cR''(\Xnew)(\Nk,\Nk)}
      = \trace{\Uk \Dk {\Uk }^{\trans} \dKnew R \dKnew^{\trans}}\\
    & = \trace{\dKnew^{\trans}\Uk \Dk \Uk^{\trans} \dKnew R},\\
    v_{6}^{(k)} & = \inner{\cL(\Xnew),\cR''(\Xnew)(\Nk,\Nk)}
      = \trace{F_{k+1}G_{k+1}{F_{k+1}}^{\trans} \dKnew R \dKnew^{\trans}}\\
    & = \trace{\dKnew^{\trans}F_{k+1}G_{k+1}{F_{k+1}}^{\trans} \dKnew R}.
  \end{aligned}
\end{equation*}
This is employing the Fr{\'e}chet derivatives from \Cref{sec:derivation}, and we
use that $\Nk = \Xnew - \Xold$. Consequently,
$R^{-1}B^{\trans}\Nk E = \Knew - \Kold = \dKnew$ holds.
Further, we have defined $\Uk = \begin{bmatrix} F_{k} & \dKold \end{bmatrix}$
and
\begin{equation*}
  \Dk = \begin{bmatrix} G_{k} & 0 \\ 0 & -R \end{bmatrix},
\end{equation*}
to express the Riccati residual in the \(k\)-th Newton step as
\(\cR(\Xold)=\Uk\Dk\Uk^{\trans}\), extending~\cite[Eqn.~(6.33b)]{Wei16} to
non-trivial center matrices.
Here, $\cL(\Xold) = F_{k}G_{k}F_{k}^{\trans}$ denotes the final Lyapunov
residual of the \(k\)-th Newton step equations.
Observe how the cyclic permutation property of the trace allows turning all
arguments into the final small dense matrices.

Sorting terms by the powers of $\xi$ in~\cref{eqn:poly} leads to five
coefficients of the fourth order polynomial in standard form.
The minimizing argument $\xi_{k}$ is computed from the zeros of
$\frac{d}{d\xi}f_{\cR,k}$.
Then, the actual step size is
\begin{equation*}
  \xi_{k} = \argmin_{\xi\in\Lambda(\tA,\tE)\cap(0,2]} f_{\cR,k}(\xi)%chktex 9
\end{equation*}
for the $3 \times 3$ generalized eigenvalue problem for the matrix pencil
\begin{equation*}
  (\tA,\tE) = \left(
    \begin{bmatrix}
      1&0&0\\
      0&1&0\\
      a_{1}&a_{2}&a_{3}
    \end{bmatrix},
    \begin{bmatrix}
      1&0&0\\
      0&1&0\\
      0&0&a_{4}
    \end{bmatrix}
  \right),
\end{equation*}
where $a = \frac{1}{\nrm{\ha}}\ha$ and $\ha\in\R^{4}$ the with components
\begin{equation*}
  \begin{aligned}
    \ha_{1}&= 2 (v_{4}^{(k)} - v_{1}^{(k)}),\\
    \ha_{2}&= 2 (v_{1}^{(k)} + v_{2}^{(k)} - 2 (v_{4}^{(k)} + v_{5}^{(k)})),\\
    \ha_{3}&= 6 (v_{5}^{(k)} - v_{6}^{(k)}),\\
    \ha_{4}&= 4 v_{3}^{(k)}.\\
  \end{aligned}
\end{equation*}

These last steps are exactly identical to the presentation
in~\cite{BenHSetal16, Wei16}.
Note that additional care is necessary when multiple consecutive iteration steps
use line search since the Riccati residual factors grow with the number of
consecutive line searches and also $\dKnew$ appends a new block of columns,
equal to its own size, with each additional line search iteration.
See the discussion in~\cite{BenHSetal16} after Equation~(5.4) for details.
In our context, the corresponding center matrix $\Dk$ then block diagonally
accumulates the corresponding center matrices rather than simple signed
identities.
Note further, that alternatively an Armijo line search can be used,
but then the step size is limited to the interval $(0,1]$. %chktex 9

While the line search can help reduce the total number of Newton steps
required, the cost of the single steps can be reduced by an inexact Newton
approach.
The above considerations are ensuring the {\em sufficient decrease condition\/}
\begin{equation*}
  \nrm[F]{\cR(\Xls)} < (1 - \xi_{k} \beta)\nrm[F]{\cR(\Xold)},
\end{equation*}
for a certain positive safety parameter $\beta$.
The inexact Newton acceleration, on the other hand, is controlled by
\begin{equation*}
  \nrm[F]{\cL(\Xnew)} < \tau_{k} \nrm[F]{\cR(\Xold)},
\end{equation*}
for an appropriate forcing sequence ${(\tau_{k})}_{k \in \mathbb{N}}$.
In~\cite{Wei16}, the author suggests $\tau_{k}=\frac{1}{k^{3}+1}$ to achieve
super-linear convergence and $\tau_{k}= \min\{0.1, 0.9\nrm[F]{\cR(\Xold)}\}$ to
preserve quadratic convergence; see~\cite[Table~6.1]{Wei16}.
In general any sequence $\tau_{k} \to 0$ for $k \to \infty$ would
guarantee super-linear convergence, while
$\tau_{k} \in \mathcal{O}(\cR(\Xold))$ ensures
quadratic convergence.
However, note that while the general low-rank inexact Newton framework builds on
the theory in~\cite{FeiHS09}, certain definiteness conditions required in their
central theorem can not be guaranteed in general in the low-rank case such that
the low-rank inexact Newton-Kleinman method may break down.
Implementations need to check this and potentially restart the method without
inexactness.

%%%%%%%%%%%%%%%%%%%%%%%%%%%%%%%%%%%%%%%%%%%%%%%%%%%%%%%%%%%%%%%%%%%%%%%%%%%%%%%%

\subsection{Non-invertible \texorpdfstring{$\boldsymbol{E}$}{E} matrices and
  projected Riccati equations}%
\label{sec:daes}

The examples for CAREs we have considered in \Cref{sec:eqns} are all based on
or associated with linear dynamical systems.
A regularly occurring situation is that these dynamical systems are described
by differential-algebraic rather than ordinary differential equations, in which
case the $E$ matrix in~\cref{eqn:riccati} becomes non-invertible.
Assume that the matrix pencil $\lambda E - A$ is regular, i.e., there exists a
$\lambda \in \C$ such that $\det(\lambda E - A) \neq 0$.
Then, one typically considers the solution of~\cref{eqn:riccati} over the
subspace of finite eigenvalues of $\lambda E - A$ via the projected
Riccati equation
\begin{equation} \label{eqn:projcare}
  \begin{aligned}
    A^{\trans} X E + E^{\trans} X A
      + \cP_{r}^{\trans} C^{\trans} Q C \cP_{r}
      - {\left( B^{\trans} X E + S^{\trans} \cP_{r} \right)}^{\trans} R^{-1}
      \left( B^{\trans} X E + S^{\trans} \cP_{r} \right) & = 0, \\
    \cP_{\ell}^{\trans} X \cP_{\ell} & = X,
  \end{aligned}
\end{equation}
where $\cP_{r}$ and $\cP_{\ell}$ are the right and left projectors
onto the subspace of finite eigenvalues of $\lambda E - A$.
In general, these are given as spectral projectors via the Weierstrass canonical
form of $\lambda E - A$; see, for example,~\cite{BenS14}.
While the necessary computations to obtain these spectral projectors
are typically undesired in the large-scale sparse case, for several practically
occurring matrix structures, the projectors have been formulated explicitly in
terms of parts the coefficient matrices~\cite{Sty08, BenS14}.

In practice, a more efficient approach than explicitly forming $\cP_{r}$ and
$\cP_{\ell}$ is the implicit application of equivalent structural projectors.
In this case, the stabilizing solution of~\cref{eqn:projcare} is directly
computed on the correct lower dimensional subspace.
Similar to the use of the spectral projectors, the implicit projection can, in
practice, only be realized for certain matrix structures, for which the
projectors onto the correct subspaces and truncation of the coefficient matrices
are known by construction;
see, for example,~\cite{BaeBSetal15, FreRM08, SaaV18}.
In all cases, it needs to be noted that the steps in \Cref{alg:newton} do not
change for~\cref{eqn:projcare}.
The case of non-invertible $E$ matrices can typically be implemented by simply
modifying the matrix-matrix and matrix-vector operations needed in
\Cref{alg:newton} to work on the correct subspaces.

%%%%%%%%%%%%%%%%%%%%%%%%%%%%%%%%%%%%%%%%%%%%%%%%%%%%%%%%%%%%%%%%%%%%%%%%%%%%%%%%
% *** NUMERICS ***                                                             %
%%%%%%%%%%%%%%%%%%%%%%%%%%%%%%%%%%%%%%%%%%%%%%%%%%%%%%%%%%%%%%%%%%%%%%%%%%%%%%%%

\section{Numerical experiments}%
\label{sec:numerics}

The experiments reported here have been executed on a machine with an
AMD Ryzen Threadripper PRO 5975WX 32-Cores processor running at
4.02\,GHz and equipped with 252\,GB total main memory.
The computer is running on Ubuntu 22.04.3 LTS and uses
MATLAB 23.2.0.2365128 (R2023b).
The proposed low-rank Newton-Kleinman method in \Cref{alg:newton} has been
implemented for dense equations using
MORLAB version~6.0~\cite{BenSW23, BenW21c} and for
large-scale sparse equations using M-M.E.S.S.
version~3.0~\cite{BenKS21, SaaKB23}.
The resulting modified versions of these two toolboxes as well as the
source code, data and results of the numerical experiments can be found
at~\cite{supSaaW24a}.
The implementations of \Cref{alg:newton} will be incorporated into the
upcoming releases of M-M.E.S.S. and MORLAB\@.

%%%%%%%%%%%%%%%%%%%%%%%%%%%%%%%%%%%%%%%%%%%%%%%%%%%%%%%%%%%%%%%%%%%%%%%%%%%%%%%%

\subsection{Experimental setup}

An overview about the used example data with the computed equation setups and
corresponding dimensions is shown in \Cref{tab:overview}.
The used example data are:
\begin{description}
  \item[\aircraft{}] is the \texttt{AC10} data set from~\cite{Lei04} modeling
    the linearized vertical-plane dynamics of an aircraft,
  \item[\msd{}] is a mass-spring-damper system with a holonomic constraint
    as described in~\cite{MehS05},
  \item[\rail{(1,6)}] models a heat transfer problem for optimal cooling of
    steel profiles in two differently accurate
    discretizations~\cite{BenS05, morwiki_steel} using the re-implementation~\cite{SaaB20, SaaB21},
  \item[\triplechain{(1,2)}] is the triplechain oscillator benchmark
    introduced in~\cite{TruV09} with two different sets of parameters and
    numbers of masses.
\end{description}
The data set \msd{} has a non-invertible $E$ matrix and is handled via
structured implicit projections as outlined in \Cref{sec:daes}, following
the theory in~\cite{SaaV18}.
To test different scenarios of matrix pencil properties paired with different
weighting terms, we have set up the different formulations of CAREs as motivated
in \Cref{sec:eqns}.
Further on, we denote examples for
equation~\cref{eqn:lqgcare} as \lqg{},
equation~\cref{eqn:hinfcare} as \hinf{},
equation~\cref{eqn:brcare} as \br{}
and equation~\cref{eqn:prcare} as \pr{}.
The modifications of the example data from the literature to fit into the
described equation types can be found in the accompanying code
package~\cite{supSaaW24a}.

To compare the solutions of different computational approaches, we evaluate
three types of scaled residual norms that have been used for similar purposes in
the literature:
\begin{align*}
  \res_{1}(X) & = \frac{\lVert \cR(X) \rVert_{2}}%
    {\lVert \hC^{\trans} \hQ \hC \rVert_{2}}, \\
  \res_{2}(X) & = \frac{\lVert \cR(X) \rVert_{2}}%
    {\lVert \hA \rVert_{2} \lVert E \rVert_{2} \lVert X \rVert_{2}
      + \lVert B R^{-1} B^{\trans} \rVert_{2}}, \\
  \res_{3}(X) & = \frac{\lVert \cR(X) \rVert_{2}}%
    {2 \lVert \hA \rVert_{2} \lVert E \rVert_{2} \lVert X \rVert_{2}
    + \lVert \hC^{\trans} \hQ \hC \rVert_{2}
    + \lVert E \rVert_{2}^{2}  \lVert X \rVert_{2}^{2}
    \lVert B R^{-1} B^{\trans} \rVert_{2}},
\end{align*}
where $\cR(.)$ is the Riccati operator from~\cref{eqn:careoper},
\begin{equation*}
  \hC^{\trans} \hQ \hC = C^{\trans} Q C - S R^{-1} S^{\trans}
  \quad\text{and}\quad
  \hA = A - B R^{-1} S^{\trans}.
\end{equation*}
In the case that multiple algorithms have been used to compute the stabilizing
solution to~\cref{eqn:riccati}, we also compare the relative differences
between these solutions via
\begin{equation*}
  \reldiff(X_{1}, X_{2}) := \frac{\lVert X_{1} - X_{2} \rVert_{2}}%
    {0.5(\lVert X_{1} \rVert_{2} + \lVert X_{2} \rVert_{2})}.
\end{equation*}

\begin{table}[t]
  \centering
  \caption{Overview about example data, matrix dimensions,
    considered equation setups
    and the stability properties of the matrix pencil $\lambda E - A$.
    The first three examples are treated as dense and the latter three as
    large-scale sparse.}%
  \label{tab:overview}
  \vspace{.5\baselineskip}

  \resizebox{\textwidth}{!}{%
  \begin{tabular}{lrrrrrccccl}
    \hline\noalign{\smallskip}
    \multicolumn{1}{c}{\textbf{example}}
      & \multicolumn{1}{c}{$\boldsymbol{n}$}
      & \multicolumn{1}{c}{$\boldsymbol{m}$}
      & \multicolumn{1}{c}{$\boldsymbol{m_{1}}$}
      & \multicolumn{1}{c}{$\boldsymbol{m_{2}}$}
      & \multicolumn{1}{c}{$\boldsymbol{p}$}
      & \multicolumn{1}{c}{\lqg{}}
      & \multicolumn{1}{c}{\hinf{}}
      & \multicolumn{1}{c}{\br{}}
      & \multicolumn{1}{c}{\pr{}}
      & \multicolumn{1}{c}{\textbf{stability}} \\
    \noalign{\smallskip}\hline\noalign{\medskip}
    \aircraft{}
      & $55$
      & $5$
      & $2$
      & $3$
      & $5$
      & \checkmark{}
      & \checkmark{}
      & ---
      & ---
      & unstable \\
    \rail{(1)}
      & $371$
      & $7$
      & $3$
      & $4$
      & $6$
      & \checkmark{}
      & \checkmark{}
      & \checkmark{}
      & \checkmark{}
      & stable \\
    \triplechain{(1)}
      & $602$
      & $1$
      & ---
      & ---
      & $1$
      & ---
      & ---
      & \checkmark{}
      & \checkmark{}
      & stable \\
    \noalign{\medskip}\hline\noalign{\medskip}
    \msd{}
      & $12\,001$
      & $1$
      & ---
      & ---
      & $3$
      & ---
      & ---
      & \checkmark{}
      & ---
      & stable \\
    \triplechain{(2)}
      & $12\,002$
      & $1$
      & ---
      & ---
      & $1$
      & ---
      & ---
      & \checkmark{}
      & \checkmark{}
      & stable \\
    \rail{(6)}
      & $317\,377$
      & $7$
      & $3$
      & $4$
      & $6$
      & \checkmark{}
      & \checkmark{}
      & \checkmark{}
      & \checkmark{}
      & stable \\
    \noalign{\medskip}\hline\noalign{\smallskip}
  \end{tabular}
  }%resizebox
\end{table}

For compactness of presentation, we introduce the following notation for the
different methods used in the numerical experiments:
\begin{description}
  \item[\newton{}] denotes the Newton-Kleinman method from \Cref{alg:newton},
  \item[\icare{}] is the built-in function from MATLAB for the solution
    of~\cref{eqn:riccati} implementing the algorithm in~\cite{ArnL84},
  \item[\sign{}] denotes the sign function iteration method for Riccati
    equations as described in~\cite{BenEQetal14},
  \item[\ri{}] is the Riccati iteration for the solution of CAREs with
    indefinite quadratic terms; see~\cite{BenHW23}.
\end{description}
Independent of the employed algorithm and the resulting format of the computed
results, e.g., factorized or unfactorized, we denote the final
approximation to the stabilizing solution $X_{\ast}$ by any of the algorithms
as $\Xmax$.

%%%%%%%%%%%%%%%%%%%%%%%%%%%%%%%%%%%%%%%%%%%%%%%%%%%%%%%%%%%%%%%%%%%%%%%%%%%%%%%%

\subsection{Convergence behavior for indefinite terms}%
\label{sec:convexp}

Before we test the proposed method on higher dimensional data sets against
other approaches, we want to investigate the convergence behavior of
\Cref{alg:newton} for the case of indefinite quadratic and constant terms.
In particular the former case is not covered by any convergence theory for
\newton{}.
First, consider the CARE~\cref{eqn:riccati} with the following matrices
\begin{equation} \label{eqn:convergence1}
  \begin{aligned}
    A & = \begin{bmatrix} 2 & 1 \\ 1 & -3 \end{bmatrix}, &
    B & = \begin{bmatrix} 1 & 1 \\ 0 & 2 \end{bmatrix}, &
    R & = \begin{bmatrix} -1 & 0 \\ 0 & 1.5 \end{bmatrix}, &
    C & = \begin{bmatrix} 1 & 1 \end{bmatrix}, \\
    E & = I_{2}, &
    S & = 0, &
    Q & = 1.
  \end{aligned}
\end{equation}
In this example, we have an unstable matrix pencil $\lambda E - A$ with one
eigenvalue in the right open and one eigenvalue in the left open half-plane.
The quadratic weighting term $R$ is indefinite but the constant weighting term
$Q$ is symmetric positive definite.
For the stabilizing solution it holds that $X_{\ast} > 0$ such that besides
\newton{}, \ri{} can be used in this example.
Due to the instability, a stabilizing initial feedback $K_{0}$ is constructed
for \newton{}; see~\cite{supSaaW24a} for details.
The convergence behavior of \newton{} for~\cref{eqn:convergence1} is
shown in \Cref{tab:convergence1}.
We observe that despite the indefinite quadratic term, the iteration provides
quadratic convergence and the intermediate closed-loop matrices
$A_{k} = A - B K_{k}$ are all stable.
However, the monotonic convergence behavior that is theoretically shown for
definite $R$ matrices is clearly not present in this example, since the
eigenvalues of $X_{k} - X_{k-1}$ have different signs for two of the iteration
steps.
Also, the definiteness of $X_{k} - X_{k-1}$ fully changes from step $4$ to $5$.

\begin{table}[t]
  \centering
  \caption{Convergence behavior of the Newton-Kleinman method (\newton{})
    for the example~\cref{eqn:convergence1}:
    For each iteration step, the columns show the normalized residuals, the
    two eigenvalues of the current closed-loop matrix and the two eigenvalues
    of the difference of two consecutive iterates.}%
  \label{tab:convergence1}
  \vspace{.5\baselineskip}

  \begin{tabular}{rrrr}
    \hline\noalign{\smallskip}
    \multicolumn{1}{c}{\textbf{iter.\ step} $\boldsymbol{k}$}
      & \multicolumn{1}{c}{$\boldsymbol{\res_{1}(X_{k})}$}
      & \multicolumn{1}{c}{$\boldsymbol{\Lambda(A_{k})}$}
      & \multicolumn{1}{c}{$\boldsymbol{\Lambda(X_{k} - X_{k - 1})}$} \\
    \noalign{\smallskip}\hline\noalign{\medskip}
    $1$
      & $5.3610\texttt{e-}01$
      & $-1.1049,\,-4.5676$
      & ---\\
    $2$
      & $3.5593\texttt{e-}02$
      & $-1.4100,\,-4.2395$
      & $\phantom{-}3.7386\texttt{e+}00,\,-1.9830\texttt{e-}03$\\
    $3$
      & $6.0872\texttt{e-}05$
      & $-1.4068,\,-4.2451$
      & $-5.0004\texttt{e-}02,\,\phantom{-}2.6109\texttt{e-}05$\\
    $4$
      & $1.5903\texttt{e-}10$
      & $-1.4068,\,-4.2451$
      & $\phantom{-}6.6211\texttt{e-}05,\,\phantom{-}1.1112\texttt{e-}09$\\
    $5$
      & $2.1316\texttt{e-}14$
      & $-1.4068,\,-4.2451$
      & $-1.3313\texttt{e-}10,\,-1.8760\texttt{e-}15$\\
    \noalign{\medskip}\hline\noalign{\smallskip}
  \end{tabular}
\end{table}

As additional verification that \Cref{alg:newton} computes the correct,
stabilizing solution, we compare it to solutions obtained via
\icare{} and \ri{}.
The corresponding residuals are given in the first block of
\Cref{tab:convergence_res} and the relative differences are
\begin{align*}
  \reldiff(\Xmax^{\newton}, \Xmax^{\icare}) & = 8.8968\texttt{e-}15, \\
  \reldiff(\Xmax^{\newton}, \Xmax^{\ri}) & = 1.0286\texttt{e-}13, \\
  \reldiff(\Xmax^{\icare}, \Xmax^{\ri}) & = 1.1175\texttt{e-}13.
\end{align*}
This clearly shows that all methods approximate the same stabilizing solution.

\begin{table}[t]
  \centering
  \caption{Residual norms for all test examples and comparison methods in
    \Cref{sec:convexp}.
    \newton{} provides as accurate or even more accurate solutions compared to
    the standard approach \icare{}.
    \ri{} only works for the first considered scenario and diverges for the
    second one.}%
  \label{tab:convergence_res}
  \vspace{.5\baselineskip}

  \begin{tabular}{llrrr}
    \hline\noalign{\smallskip}
    \multicolumn{1}{c}{\textbf{example}}
      & \multicolumn{1}{c}{\textbf{method}}
      & \multicolumn{1}{c}{$\boldsymbol{\res_{1}(\Xmax)}$}
      & \multicolumn{1}{c}{$\boldsymbol{\res_{2}(\Xmax)}$}
      & \multicolumn{1}{c}{$\boldsymbol{\res_{3}(\Xmax)}$} \\
    \noalign{\smallskip}\hline\noalign{\medskip}
      & \newton{}
      & $9.5151\texttt{e-}15$
      & $2.2825\texttt{e-}16$
      & $8.7894\texttt{e-}18$ \\
    example~\cref{eqn:convergence1}
      & \icare{}
      & $3.5804\texttt{e-}15$
      & $8.5887\texttt{e-}17$
      & $3.3073\texttt{e-}18$ \\
      & \ri{}
      & $3.1979\texttt{e-}12$
      & $7.6713\texttt{e-}14$
      & $2.9540\texttt{e-}15$ \\
    \noalign{\medskip}\hline\noalign{\medskip}
      & \newton{}
      & $1.9453\texttt{e-}14$
      & $3.4368\texttt{e-}16$
      & $1.2723\texttt{e-}17$ \\
    example~\cref{eqn:convergence2}
      & \icare{}
      & $6.0957\texttt{e-}13$
      & $1.0770\texttt{e-}14$
      & $3.9870\texttt{e-}16$ \\
      & \ri{}
      & $1.7227\texttt{e+}40$
      & $2.5740\texttt{e+}19$
      & $8.3380\texttt{e-}02$ \\
    \noalign{\medskip}\hline\noalign{\medskip}
    example~\cref{eqn:convergence3}
      & \newton{}
      & $3.2437\texttt{e-}17$
      & $3.0279\texttt{e-}17$
      & $9.9467\texttt{e-}18$ \\
      & \icare{}
      & $1.9624\texttt{e-}16$
      & $1.8318\texttt{e-}16$
      & $6.0177\texttt{e-}17$ \\
    \noalign{\medskip}\hline\noalign{\smallskip}
  \end{tabular}
\end{table}

Now, we modify the example data by increasing the positive definite part of the
$R$ matrix in~\cref{eqn:convergence1} such that we have now
\begin{equation} \label{eqn:convergence2}
  \begin{aligned}
    A & = \begin{bmatrix} 2 & 1 \\ 1 & -3 \end{bmatrix}, &
    B & = \begin{bmatrix} 1 & 1 \\ 0 & 2 \end{bmatrix}, &
    R & = \begin{bmatrix} -1 & 0 \\ 0 & 2 \end{bmatrix}, &
    C & = \begin{bmatrix} 1 & 1 \end{bmatrix}, \\
    E & = I_{2}, &
    S & = 0, &
    Q & = 1.
  \end{aligned}
\end{equation}
Similar to~\cref{eqn:convergence1}, we consider the case of an indefinite
weighing matrix in the quadratic term of~\cref{eqn:riccati}; however, the change
in the data results in the stabilizing solution $X_{\ast}$ being indefinite.
The convergence behavior of \newton{} for this case is shown in
\Cref{tab:convergence2}.
As in the previous example, the convergence is quadratic towards the
stabilizing solution and the iterates do not show any monotonicity.
Additionally, we do not have the stability of all closed-loop matrices during
the iteration as the one computed in the first step is clearly unstable.
We do not expect \ri{} to work for this case due to $X_{\ast}$ being indefinite
and, in fact, we see in the second block row of \Cref{tab:convergence_res} that
\ri{} does not converge to a solution of~\cref{eqn:riccati}.
However, \newton{} clearly converges to the correct solution with a relative
difference to the solution computed by \icare{} of
\begin{equation*}
  \reldiff(\Xmax^{\newton{}}, \Xmax^{\icare{}}) = 5.6279\texttt{e-}15.
\end{equation*}

\begin{table}[t]
  \centering
  \caption{Convergence behavior of the Newton-Kleinman method (\newton{})
    for the example~\cref{eqn:convergence2}:
    For each iteration step, the columns show the normalized residuals, the
    two eigenvalues of the current closed-loop matrix and the two eigenvalues
    of the difference of two consecutive iterates.}%
  \label{tab:convergence2}
  \vspace{.5\baselineskip}

  \begin{tabular}{rrrr}
    \hline\noalign{\smallskip}
    \multicolumn{1}{c}{\textbf{iter.\ step} $\boldsymbol{k}$}
      & \multicolumn{1}{c}{$\boldsymbol{\res_{1}(X_{k})}$}
      & \multicolumn{1}{c}{$\boldsymbol{\Lambda(A_{k})}$}
      & \multicolumn{1}{c}{$\boldsymbol{\Lambda(X_{k} - X_{k - 1})}$} \\
    \noalign{\smallskip}\hline\noalign{\medskip}
    $1$
      & $1.3423\texttt{e+}01$
      & $\phantom{-}2.3071,\,-7.8315$
      & ---\\
    $2$
      & $3.1646\texttt{e-}01$
      & $-4.1113,\,-1.4164$
      & $-1.3696\texttt{e+}02,\,-7.4641\texttt{e-}04$\\
    $3$
      & $8.2620\texttt{e-}03$
      & $-4.0451,\,-1.4620$
      & $-7.8472\texttt{e-}01,\,\phantom{-}2.3569\texttt{e-}04 $\\
    $4$
      & $1.9458\texttt{e-}06$
      & $-4.0448,\,-1.4626$
      & $-8.1348\texttt{e-}03,\,\phantom{-}7.4808\texttt{e-}08$ \\
    $5$
      & $3.3469\texttt{e-}14$
      & $-4.0448,\,-1.4626$
      & $\phantom{-}1.5635\texttt{e-}06,\,\phantom{-}1.1925\texttt{e-}11$ \\
    \noalign{\medskip}\hline\noalign{\smallskip}
  \end{tabular}
\end{table}

As final preliminary example, we want to investigate the effect of an
indefinite constant term.
Therefore, we modify the previous example as follows
\begin{equation} \label{eqn:convergence3}
  \begin{aligned}
    A & = \begin{bmatrix} 2 & 1 \\ 1 & -3 \end{bmatrix}, &
    B & = \begin{bmatrix} 1 \\ 1 \end{bmatrix}, &
    Q & = \begin{bmatrix} 1 & 0 \\ 0 & -2 \end{bmatrix}, &
    C & = \begin{bmatrix} 1 & 1 \\ 0 & 2 \end{bmatrix}, \\
    E & = I_{2}, &
    S & = 0, &
    R & = 1.
  \end{aligned}
\end{equation}
Since we have already seen the effects of an indefinite quadratic term, we
consider here $R > 0$ for simplicity.
The stabilizing solution in this example is indefinite again.
\Cref{tab:convergence3} shows the convergence behavior of \newton{} for this
example case.
We see exactly what was expected from \Cref{thm:convergence}:
the closed-loop matrices are stable in all steps, the convergence is quadratic
and monotonic.
Since \ri{} has not been extended to the case of indefinite constant terms and
the solution is not positive semi-definite, we omit the comparing computations
with this method here and only provide the results of \icare{} instead.
The residual norms can be found in the third block row of
\Cref{tab:convergence_res} and the relative difference between the solutions
computed by \newton{} and \icare{} is
\begin{equation*}
  \reldiff(\Xmax^{\newton{}}, \Xmax^{\icare{}}) = 8.8968\texttt{e-}15.
\end{equation*}
Both methods appear to approximate the same stabilizing solution.

\begin{table}[t]
  \centering
  \caption{Convergence behavior of the Newton-Kleinman method (\newton{})
    for the example~\cref{eqn:convergence3}:
    For each iteration step, the columns show the normalized residuals, the
    two eigenvalues of the current closed-loop matrix and the two eigenvalues
    of the difference of two consecutive iterates.}%
  \label{tab:convergence3}
  \vspace{.5\baselineskip}

  \begin{tabular}{rrrr}
    \hline\noalign{\smallskip}
    \multicolumn{1}{c}{\textbf{iter.\ step} $\boldsymbol{k}$}
      & \multicolumn{1}{c}{$\boldsymbol{\res_{1}(X_{k})}$}
      & \multicolumn{1}{c}{$\boldsymbol{\Lambda(A_{k})}$}
      & \multicolumn{1}{c}{$\boldsymbol{\Lambda(X_{k} - X_{k - 1})}$} \\
    \noalign{\smallskip}\hline\noalign{\medskip}
    $1$
      & $1.9109\texttt{e-}01$
      & $-3.3289,\,-0.8111$
      & ---\\
    $2$
      & $1.4573\texttt{e-}02$
      & $-3.2914,\,-0.5227$
      & $-3.3630\texttt{e-}01,\,-1.3069\texttt{e-}02$ \\
    $3$
      & $7.0984\texttt{e-}04$
      & $-3.2887,\,-0.4383$
      & $-5.9143\texttt{e-}02,\,-9.6054\texttt{e-}04$ \\
    $4$
      & $5.7445\texttt{e-}06$
      & $-3.2886,\,-0.4301$
      & $-5.0185\texttt{e-}03,\,-7.8659\texttt{e-}06$ \\
    $5$
      & $5.0562\texttt{e-}10$
      & $-3.2886,\,-0.4300$
      & $-4.6517\texttt{e-}05,\,-4.3494\texttt{e-}09$ \\
    $6$
      & $3.0792\texttt{e-}17$
      & $-3.2886,\,-0.4300$
      & $-4.2059\texttt{e-}09,\,-2.1477\texttt{e-}14$ \\
    \noalign{\medskip}\hline\noalign{\smallskip}
  \end{tabular}
\end{table}

%%%%%%%%%%%%%%%%%%%%%%%%%%%%%%%%%%%%%%%%%%%%%%%%%%%%%%%%%%%%%%%%%%%%%%%%%%%%%%%%

\subsection{Numerical comparisons}

In this section, we compare the proposed algorithm with established solvers
on different benchmark data sets from the literature and equation
scenarios.
While we concentrate on examples with small-scale dense coefficient
matrices in the first part to establish trust into the proposed \newton{}
method, we present results for large-scale sparse matrices afterwards.
The inexact Newton-Kleinman method with line search has been implemented
for the large-scale sparse case.
However, we have observed some inconsistent behavior concerning the considered
example setups due to which we decided to present only the results for the exact
Newton-Kleinman method.

%%%%%%%%%%%%%%%%%%%%%%%%%%%%%%%%%%%%%%%%

\subsubsection{Examples with dense coefficient matrices}%
\label{sec:denseexp}

\begin{table}
  \centering
  \caption{Residual norms for all dense test examples and
    comparison methods in \Cref{sec:denseexp}.
    \newton{} provides reasonably accurate and often the most accurate
    approximations compared to \sign{} and \icare{}.}%
  \label{tab:dense_res}
  \vspace{.5\baselineskip}

  \begin{tabular}{llrrr}
    \hline\noalign{\smallskip}
    \multicolumn{1}{c}{\textbf{example}}
      & \multicolumn{1}{c}{\textbf{method}}
      & \multicolumn{1}{c}{$\boldsymbol{\res_{1}(\Xmax)}$}
      & \multicolumn{1}{c}{$\boldsymbol{\res_{2}(\Xmax)}$}
      & \multicolumn{1}{c}{$\boldsymbol{\res_{3}(\Xmax)}$} \\
    \noalign{\smallskip}\hline\noalign{\medskip}
      & \newton{}
      & $4.1073\texttt{e-}08$
      & $1.3682\texttt{e-}20$
      & $4.5781\texttt{e-}27$ \\
    \aircraft\hfill(\lqg)
      & \sign{}
      & $2.6483\texttt{e-}05$
      & $8.8220\texttt{e-}18$
      & $2.9519\texttt{e-}24$ \\
      & \icare{}
      & $1.0713\texttt{e-}07$
      & $3.5686\texttt{e-}20$
      & $1.1941\texttt{e-}26$ \\
    \noalign{\medskip}\hline\noalign{\medskip}
      & \newton{}
      & $7.4736\texttt{e-}07$
      & $2.4035\texttt{e-}20$
      & $9.7826\texttt{e-}26$ \\
    \aircraft\hfill(\hinf)
      & \sign{}
      & $3.0047\texttt{e-}06$
      & $9.6631\texttt{e-}20$
      & $3.9330\texttt{e-}25$ \\
      & \icare{}
      & $1.9949\texttt{e-}05$
      & $6.4155\texttt{e-}19$
      & $2.6112\texttt{e-}24$ \\
    \noalign{\medskip}\hline\noalign{\medskip}
      & \newton{}
      & $7.5678\texttt{e-}14$
      & $1.0229\texttt{e-}14$
      & $8.8312\texttt{e-}16$ \\
    \rail{(1)}\hfill(\lqg)
      & \sign{}
      & $3.1905\texttt{e-}10$
      & $4.3122\texttt{e-}11$
      & $3.7231\texttt{e-}12$ \\
      & \icare{}
      & $2.0819\texttt{e-}13$
      & $2.8139\texttt{e-}14$
      & $2.4294\texttt{e-}15$ \\
    \noalign{\medskip}\hline\noalign{\medskip}
      & \newton{}
      & $2.4749\texttt{e-}12$
      & $4.0514\texttt{e-}13$
      & $8.6384\texttt{e-}16$ \\
    \rail{(1)}\hfill(\hinf)
      & \sign{}
      & $5.5336\texttt{e-}10$
      & $9.0584\texttt{e-}11$
      & $1.9314\texttt{e-}13$ \\
      & \icare{}
      & $5.4824\texttt{e-}14$
      & $8.9746\texttt{e-}15$
      & $1.9136\texttt{e-}17$ \\
    \noalign{\medskip}\hline\noalign{\medskip}
      & \newton{}
      & $1.5986\texttt{e-}13$
      & $1.4855\texttt{e-}14$
      & $6.2186\texttt{e-}15$ \\
    \rail{(1)}\hfill(\br)
      & \sign{}
      & $3.1304\texttt{e-}14$
      & $2.9089\texttt{e-}15$
      & $1.2178\texttt{e-}15$ \\
      & \icare{}
      & $1.3867\texttt{e-}13$
      & $1.2886\texttt{e-}14$
      & $5.3943\texttt{e-}15$ \\
    \noalign{\medskip}\hline\noalign{\medskip}
      & \newton{}
      & $9.3161\texttt{e-}12$
      & $6.4179\texttt{e-}13$
      & $2.3614\texttt{e-}13$ \\
    \rail{(1)}\hfill(\pr)
      & \sign{}
      & $4.3343\texttt{e-}10$
      & $2.9859\texttt{e-}11$
      & $1.0986\texttt{e-}11$ \\
      & \icare{}
      & $6.4693\texttt{e-}14$
      & $4.4567\texttt{e-}15$
      & $1.6398\texttt{e-}15$ \\
    \noalign{\medskip}\hline\noalign{\medskip}
      & \newton{}
      & $3.6923\texttt{e-}11$
      & $3.0799\texttt{e-}16$
      & $1.5397\texttt{e-}16$ \\
    \triplechain{(1)}\hfill(\br)
      & \sign{}
      & $9.2343\texttt{e-}11$
      & $7.7027\texttt{e-}16$
      & $3.8506\texttt{e-}16$ \\
      & \icare{}
      & $1.6221\texttt{e-}10$
      & $1.3531\texttt{e-}15$
      & $6.7641\texttt{e-}16$ \\
    \noalign{\medskip}\hline\noalign{\medskip}
      & \newton{}
      & $6.4378\texttt{e-}12$
      & $6.9411\texttt{e-}17$
      & $1.4807\texttt{e-}15$ \\
    \triplechain{(1)}\hfill(\pr)
      & \sign{}
      & $3.6829\texttt{e-}12$
      & $3.9708\texttt{e-}17$
      & $8.4708\texttt{e-}16$ \\
      & \icare{}
      & $1.6286\texttt{e-}11$
      & $1.7559\texttt{e-}16$
      & $3.7457\texttt{e-}15$ \\
    \noalign{\medskip}\hline\noalign{\smallskip}
  \end{tabular}
\end{table}

\begin{table}[t]
  \centering
  \caption{Relative differences for the dense test examples in
    \Cref{sec:denseexp}.
    All differences are reasonably low such that numerically we can rely
    on the results provided by \newton{}.}%
  \label{tab:dense_reldiff}
  \vspace{.5\baselineskip}

  \begin{tabular}{lrr}
    \hline\noalign{\smallskip}
    \multicolumn{1}{c}{\textbf{example}}
      & $\boldsymbol{\reldiff(\Xmax^{\newton}, \Xmax^{\sign})}$
      & $\boldsymbol{\reldiff(\Xmax^{\newton}, \Xmax^{\icare})}$ \\
    \noalign{\smallskip}\hline\noalign{\medskip}
    \aircraft\hfill(\lqg)
      & $1.5687\texttt{e-}12$
      & $5.2915\texttt{e-}14$ \\
    \aircraft\hfill(\hinf)
      & $3.4874\texttt{e-}13$
      & $5.4917\texttt{e-}13$ \\
    \rail{(1)}\hfill(\lqg)
      & $3.3423\texttt{e-}10$
      & $1.0539\texttt{e-}11$ \\
    \rail{(1)}\hfill(\hinf)
      & $8.0362\texttt{e-}10$
      & $8.0170\texttt{e-}10$ \\
    \rail{(1)}\hfill(\br)
      & $9.2682\texttt{e-}13$
      & $9.2764\texttt{e-}13$ \\
    \rail{(1)}\hfill(\pr)
      & $6.4985\texttt{e-}10$
      & $6.6590\texttt{e-}10$ \\
    \triplechain{(1)}\hfill(\br)
      & $2.3524\texttt{e-}11$
      & $4.8920\texttt{e-}11$ \\
    \triplechain{(1)}\hfill(\pr)
      & $1.7659\texttt{e-}12$
      & $1.1858\texttt{e-}10$ \\
    \noalign{\medskip}\hline\noalign{\smallskip}
  \end{tabular}
\end{table}

An overview about the experiments presented in this section is given in
the first block row of \Cref{tab:overview}.
We decided to start by experimenting with small-scale dense coefficient matrices since
for this case, there are well established solvers that can handle the general
case~\cref{eqn:riccati}, which we consider in this paper.
Such a variety of methods is not given for large-scale sparse matrices;
therefore, here, we numerically establish trust into the solutions
obtained by \newton{} and show that they provide reasonable accuracy in
comparison to other approaches.
As inner solver for the occurring Lyapunov equations, we use the
$LDL^{\trans}$-factorized sign function iteration
method~\cite[Alg.~7]{BenW21b} from the MORLAB toolbox~\cite{BenSW23, BenW21c}.
For the comparison, we have selected \sign{} and \icare{} as two
well-established approaches for general CAREs with dense coefficient matrices.
The results of the experiments are shown in \Cref{tab:dense_res} in form of the
residual norms for the different methods and in \Cref{tab:dense_reldiff}, which
shows the relative differences between the solutions computed by the different
approaches.
For further experimental metrics such as the amount of iteration steps taken
by \newton{} and \sign{}, computation times, and more, we refer the reader to
the log files of the experiments in the accompanying code
package~\cite{supSaaW24a}.

Overall, we can evaluate that \newton{} performs comparably well or even best
among all those methods.
Note that we used $10^{-12}$ as convergence tolerance for the normalized
residual norm internally computed by \newton{} such that we do not expect much
smaller values for $\res_{1}(\Xmax)$ in \Cref{tab:dense_res}.
Despite that, \newton{} shows in various examples up to one order of magnitude
better residuals than \icare{} and often several orders of magnitude better
residuals than \sign{}.
The relative differences in \Cref{tab:dense_reldiff} show numerically that all
three methods approximate the same stabilizing solution to the example
equations and provide similar solutions with many significant digits of
accuracy in common.
With these results at hand, we believe that applying \newton{} in the
large-scale sparse setting will provide correct as well as sufficiently accurate
solutions.

%%%%%%%%%%%%%%%%%%%%%%%%%%%%%%%%%%%%%%%%

\subsubsection{Examples with large-scale sparse coefficient matrices}%
\label{sec:sparseexp}

\begin{table}[t]
  \centering
  \caption{Residual norms for all sparse test examples and
    comparison methods in \Cref{sec:sparseexp}.
    \newton{} provides very accurate approximations throughout all examples
    with residual norms up to eight orders of magnitude smaller than \ri{}.}%
  \label{tab:sparse_res}
  \vspace{.5\baselineskip}

  \begin{tabular}{llrrr}
    \hline\noalign{\smallskip}
    \multicolumn{1}{c}{\textbf{example}}
      & \multicolumn{1}{c}{\textbf{method}}
      & \multicolumn{1}{c}{$\boldsymbol{\res_{1}(\Xmax)}$}
      & \multicolumn{1}{c}{$\boldsymbol{\res_{2}(\Xmax)}$}
      & \multicolumn{1}{c}{$\boldsymbol{\res_{3}(\Xmax)}$} \\
    \noalign{\smallskip}\hline\noalign{\medskip}
    \msd\hfill(\br)
      & \newton{}
      & $2.1781\texttt{e-}13$
      & $1.4945\texttt{e-}17$
      & $1.8157\texttt{e-}19$ \\
      & \ri{}
      & $1.6943\texttt{e-}08$
      & $1.1626\texttt{e-}12$
      & $1.4124\texttt{e-}14$ \\
    \noalign{\medskip}\hline\noalign{\medskip}
    \triplechain{(2)}\hfill(\br)
      & \newton{}
      & $7.0476\texttt{e-}13$
      & $7.5953\texttt{e-}21$
      & $3.4138\texttt{e-}21$ \\
      & \ri{}
      & $3.6606\texttt{e-}05$
      & $3.9451\texttt{e-}13$
      & $1.7732\texttt{e-}13$ \\
    \noalign{\medskip}\hline\noalign{\medskip}
    \triplechain{(2)}\hfill(\pr)
      & \newton{}
      & $5.0010\texttt{e-}13$
      & $4.6881\texttt{e-}21$
      & $3.5724\texttt{e-}24$ \\
      & \ri{}
      & $3.4240\texttt{e-}05$
      & $3.2098\texttt{e-}13$
      & $2.4459\texttt{e-}16$ \\
    \noalign{\medskip}\hline\noalign{\medskip}
    \rail{(6)}\hfill(\lqg)
      & \newton{}
      & $9.6445\texttt{e-}13$
      & $6.6215\texttt{e-}14$
      & $2.8035\texttt{e-}14$ \\
    \noalign{\medskip}\hline\noalign{\medskip}
    \rail{(6)}\hfill(\hinf)
      & \newton{}
      & $4.5192\texttt{e-}13$
      & $2.8799\texttt{e-}14$
      & $1.4346\texttt{e-}15$ \\
      & \ri{}
      & $1.1576\texttt{e-}10$
      & $7.3768\texttt{e-}12$
      & $3.6748\texttt{e-}13$ \\
    \noalign{\medskip}\hline\noalign{\medskip}
    \rail{(6)}\hfill(\br)
      & \newton{}
      & $8.8948\texttt{e-}14$
      & $4.6198\texttt{e-}15$
      & $2.2423\texttt{e-}15$ \\
      & \ri{}
      & $1.6442\texttt{e-}10$
      & $8.5396\texttt{e-}12$
      & $4.1448\texttt{e-}12$ \\
    \noalign{\medskip}\hline\noalign{\medskip}
    \rail{(6)}\hfill(\pr)
      & \newton{}
      & $2.8870\texttt{e-}13$
      & $5.3150\texttt{e-}16$
      & $2.6375\texttt{e-}16$
      \\
      & \ri{}
      & $5.7022\texttt{e-}09$
      & $1.0498\texttt{e-}11$
      & $5.2095\texttt{e-}12$
      \\
    \noalign{\medskip}\hline\noalign{\smallskip}
  \end{tabular}
\end{table}

\begin{table}[t]
  \centering
  \caption{Relative differences for all sparse examples in \Cref{sec:sparseexp}.
    All differences are reasonably low such that numerically we can rely
    on the results provided by \newton{}.
    Due to the lack of comparison methods, relative differences could not be
    provided for all test scenarios.}%
  \label{tab:sparse_reldiff}
  \vspace{.5\baselineskip}

  \begin{tabular}{lr}
    \hline\noalign{\smallskip}
    \multicolumn{1}{c}{\textbf{example}}
      & $\boldsymbol{\reldiff(\Xmax^{\newton}, \Xmax^{\ri})}$ \\
    \noalign{\smallskip}\hline\noalign{\medskip}
    \msd\hfill(\br) & $1.8490\texttt{e-}12$ \\
    \triplechain{(2)}\hfill(\br) & $6.9416\texttt{e-}11$ \\
    \triplechain{(2)}\hfill(\pr) & $6.9221\texttt{e-}11$\\
    \rail{(6)}\hfill(\hinf) & $5.4646\texttt{e-}10$ \\
    \rail{(6)}\hfill(\br) & $4.9422\texttt{e-}08$ \\
    \rail{(6)}\hfill(\pr) & $8.6206\texttt{e-}10$\\
    \noalign{\medskip}\hline\noalign{\smallskip}
  \end{tabular}
\end{table}

\begin{table}[t]
  \centering
  \caption{Numbers of performed iteration steps for all sparse test examples and
    comparison methods in \Cref{sec:sparseexp}.}%
  \label{tab:sparse_iter}
  \vspace{.5\baselineskip}

  \begin{tabular}{lrr}
    \hline\noalign{\smallskip}
    \multicolumn{1}{c}{\textbf{example}}
      & \multicolumn{1}{c}{\textbf{\# \newton{} iteration steps}}
      & \multicolumn{1}{c}{\textbf{\# \ri{} iteration steps}} \\
    \noalign{\smallskip}\hline\noalign{\medskip}
    \msd\hfill(\br)
      & $6$
      & $4$ \\
    \triplechain{(2)}\hfill(\br)
      & $2$
      & $2$ \\
    \triplechain{(2)}\hfill(\pr)
      & $4$
      & $4$ \\
    \rail{(6)}\hfill(\lqg)
      & $9$
      & --- \\
    \rail{(6)}\hfill(\hinf)
      & $11$
      & $3$ \\
    \rail{(6)}\hfill(\br)
      & $6$
      & $3$ \\
    \rail{(6)}\hfill(\pr)
      & $7$
      & $6$ \\
    \noalign{\medskip}\hline\noalign{\smallskip}
  \end{tabular}
\end{table}

Now we consider the case of CARE examples with large-scale sparse coefficient
matrices.
An overview about these experiments is given in the second block row of
\Cref{tab:overview}.
Whenever possible, we used \ri{} as comparison method where we chose RADI as
solver for the Riccati equations with positive semi-definite quadratic terms
occurring in each step of the iteration.
For the implementation of \newton{}, we use the $LDL^{\trans}$-factorized
ADI method~\cite{LanMS14} as the solver of the inner Lyapunov equations.
The iterations are stopped when the implicitly computed $\res_{1}$ are below the
convergence tolerance $10^{-12}$.
In \ri{} on the other hand, we use the RADI method~\cite{BenBKetal18} to solve
the occurring classical Riccati equations as efficient as possible.
The residual norms of the computed results are shown in \Cref{tab:sparse_res},
the relative differences for examples in which \newton{} and \ri{} could be
applied can be found in \Cref{tab:sparse_reldiff}
and \Cref{tab:sparse_iter} shows the number of performed iteration steps for
the two compared methods.

The residual norms in \Cref{tab:sparse_res} show \newton{} to provide accurate
solutions to all example equations.
It stands out that, in all examples, \newton{} provides residual norms that are
at least three orders of magnitude better than those of the solutions provided
by \ri{}.
One possible explanation for these results is that in \ri{}, the overall
solution is accumulated via column concatenation and truncation.
This easily leads to the loss of numerical accuracy especially in the cases
when the stabilizing solution is badly conditioned.
For \rail{(6)}~(\lqg), we could not use \ri{} for the comparison, since the
constant term in this example is indefinite by construction.
The stabilizing solution however is numerically positive semi-definite.

\begin{figure}[t]
  \centering
  \begin{subfigure}[b]{.49\textwidth}
    \centering
  \tikzexternalenable%
  \tikzsetnextfilename{rail_lqg}%
  \begin{tikzpicture}
  \pgfplotstableread{graphics/data/large_rail_nm_lqg.dat}\tableNEWTON

  \begin{semilogyaxis}[%
    width  = .675\textwidth,
    height = .4\textwidth,
    scale only axis,
    xmin = 0,
    xmax = 40,
    ymin = 1e-14,
    ymax = 1e+2,
    xminorticks = false,
    yminorticks = false,
    xlabel = {computation time (min)},
    ylabel = {implicit residual},
    ylabel style   = {yshift = -.3em},
    scaled x ticks = false,
    x tick label style = {/pgf/number format/fixed},
    clip mode          = individual]

    \addplot[newton] table[x index = 0, y index = 1] {\tableNEWTON};
  \end{semilogyaxis}
\end{tikzpicture}%
  \tikzexternaldisable%

    \caption{\rail{(6)}~(\lqg).}\label{fig:rail_lqg}
  \end{subfigure}%
  \hfill%
  \begin{subfigure}[b]{.49\textwidth}
    \centering
  \tikzexternalenable%
  \tikzsetnextfilename{rail_hinf}%
  \begin{tikzpicture}
  \pgfplotstableread{graphics/data/large_rail_nm_hinf.dat}\tableNEWTON
  \pgfplotstableread{graphics/data/large_rail_ri_hinf.dat}\tableRI

  \begin{semilogyaxis}[%
    width  = .675\textwidth,
    height = .4\textwidth,
    scale only axis,
    xmin = 0,
    xmax = 48,
    ymin = 1e-22,
    ymax = 1e+4,
    xminorticks = false,
    yminorticks = false,
    xlabel = {computation time (min)},
    ylabel = {implicit residual},
    ylabel style   = {yshift = -.3em},
    scaled x ticks = false,
    x tick label style = {/pgf/number format/fixed},
    clip mode          = individual]

    \addplot[newton] table[x index = 0, y index = 1] {\tableNEWTON};
    \addplot[ri] table[x index = 0, y index = 1] {\tableRI};
  \end{semilogyaxis}
\end{tikzpicture}%
  \tikzexternaldisable%

    \caption{\rail{(6)}~(\hinf).}\label{fig:rail_hinf}
  \end{subfigure}

  \vspace{.5\baselineskip}
  \begin{subfigure}[b]{.49\textwidth}
    \centering
  \tikzexternalenable%
  \tikzsetnextfilename{rail_br}%
  \begin{tikzpicture}
  \pgfplotstableread{graphics/data/large_rail_nm_br.dat}\tableNEWTON
  \pgfplotstableread{graphics/data/large_rail_ri_br.dat}\tableRI

  \begin{semilogyaxis}[%
    width  = .675\textwidth,
    height = .4\textwidth,
    scale only axis,
    xmin = 0,
    xmax = 14,
    ymin = 1e-15,
    ymax = 1e+1,
    xminorticks = false,
    yminorticks = false,
    xlabel = {computation time (min)},
    ylabel = {implicit residual},
    ylabel style   = {yshift = -.3em},
    scaled x ticks = false,
    x tick label style = {/pgf/number format/fixed},
    clip mode          = individual]

    \addplot[newton] table[x index = 0, y index = 1] {\tableNEWTON};
    \addplot[ri] table[x index = 0, y index = 1] {\tableRI};
  \end{semilogyaxis}
\end{tikzpicture}%
  \tikzexternaldisable%

    \caption{\rail{(6)}~(\br).}\label{fig:rail_br}
  \end{subfigure}%
  \hfill%
  \begin{subfigure}[b]{.49\textwidth}
    \centering
  \tikzexternalenable%
  \tikzsetnextfilename{rail_pr}%
  \begin{tikzpicture}
  \pgfplotstableread{graphics/data/large_rail_nm_pr.dat}\tableNEWTON
  \pgfplotstableread{graphics/data/large_rail_ri_pr.dat}\tableRI

  \begin{semilogyaxis}[%
    width  = .675\textwidth,
    height = .4\textwidth,
    scale only axis,
    xmin = 0,
    xmax = 28,
    ymin = 1e-14,
    ymax = 1e+8,
    xminorticks = false,
    yminorticks = false,
    xlabel = {computation time (min)},
    ylabel = {implicit residual},
    ylabel style   = {yshift = -.3em},
    scaled x ticks = false,
    x tick label style = {/pgf/number format/fixed},
    clip mode          = individual]

    \addplot[newton] table[x index = 0, y index = 1] {\tableNEWTON};
    \addplot[ri] table[x index = 0, y index = 1] {\tableRI};
  \end{semilogyaxis}
\end{tikzpicture}%
  \tikzexternaldisable%

    \caption{\rail{(6)}~(\pr).}\label{fig:rail_rp}
  \end{subfigure}

  \vspace{.5\baselineskip}
  \tikzexternalenable%
  \tikzsetnextfilename{rail_legend}%
  \begin{tikzpicture}
  \begin{axis}[%
    hide axis,
    width  = 1mm,
    height = 1mm,
    scale only axis,
    xmin = 0,
    xmax = 1,
    ymin = 0,
    ymax = 1,
    legend columns = 4,
    legend style   = {
      at     = {(0,0)},
      anchor = center,
      /tikz/every even column/.append style = {column sep = 0.2cm}},
    legend cell align  = {left},
    clip mode          = individual]

    \addlegendimage{newton}
    \addlegendentry{\newton{}}

    \addlegendimage{ri}
    \addlegendentry{\ri{}}
  \end{axis}
\end{tikzpicture}%
  \tikzexternaldisable%

  \caption{Convergence of \newton{} and \ri{} for all example equations with the
    \rail{(6)} data set:
    We can see that in all examples where it was applicable \ri{} obtains its
    final approximation significantly faster than \newton{}.
    This comes from the use of RADI as underlying solver.
    However, the shown implicit residual computed by \ri{} is not accurate as
    shown in \Cref{tab:sparse_res}.}%
  \label{fig:rail}
\end{figure}
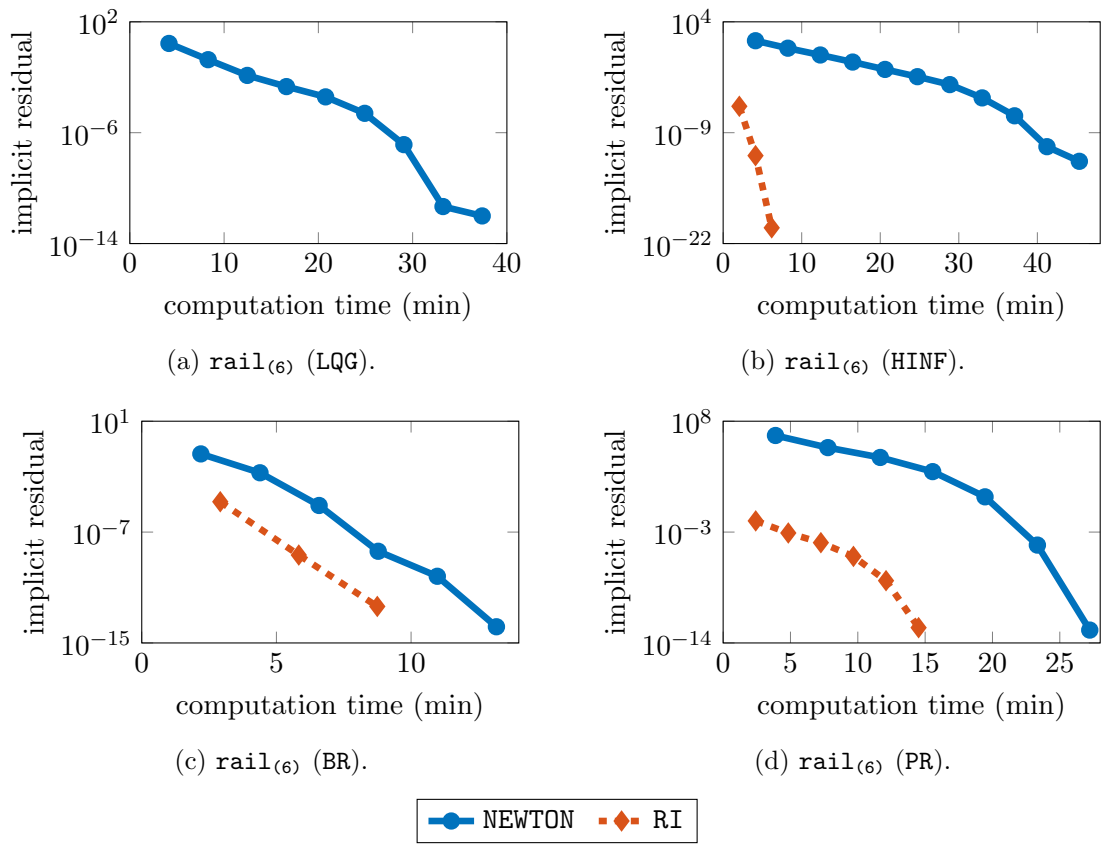

The convergence behavior of \newton{} and \ri{} for the example equations on
the data set \rail{(6)} is illustrated in \Cref{fig:rail}.
These and similar plots for the other sparse examples can be found in the
accompanying code package~\cite{supSaaW24a}.
The plots show that for the cases that \ri{} was applicable, it strongly
outperformed \newton{} in terms of computation time.
This is a result of the choice for the internal CARE solver in \ri{}, which in
our experiments was the RADI method~\cite{BenBKetal18}.
The residuals shown are those that the methods implicitly compute during the
iterations to determine convergence.
Comparing these plots with \Cref{tab:sparse_res} reveals that the residuals
internally computed by \ri{} strongly diverge from the actual normalized
residual norm $\res_{1}(\Xmax)$, which is several orders of magnitude larger.
On the other hand, for \newton{} the results seem to coincide very well.

%%%%%%%%%%%%%%%%%%%%%%%%%%%%%%%%%%%%%%%%%%%%%%%%%%%%%%%%%%%%%%%%%%%%%%%%%%%%%%%%
% *** CONCLUSIONS ***                                                          %
%%%%%%%%%%%%%%%%%%%%%%%%%%%%%%%%%%%%%%%%%%%%%%%%%%%%%%%%%%%%%%%%%%%%%%%%%%%%%%%%

\section{Conclusions}%
\label{sec:conclusions}

In this work, we presented a new formulation of the Newton-Kleinman iteration
for solving general continuous-time algebraic Riccati equations with large-scale
sparse coefficient matrices using low-rank indefinite symmetric $LDL^{\trans}$
factorizations of the solution.
For relevant scenarios from the literature, we could show the theoretical
convergence of the algorithm.
We provided the updated formulas for an exact line search procedure and inexact
inner solves, and we outlined how to handle the case of projected algebraic
Riccati equations occurring for matrix pencils with infinite eigenvalues.
The numerical experiments show that our proposed algorithm provides reliable and
accurate solutions to the considered problem and that even in the cases for
which we could not provide a convergence theory, the algorithm appears to work
perfectly fine.

While we were able to provide convergence results for many of the practically
occurring cases, the convergence behavior for the case of indefinite quadratic
terms remains unsolved.
The numerical results suggest that even in this situation, the proposed
Newton-Kleinman method converges to the correct solution, however, the lack of
monotonicity in the constructed iterates prevents the use of established
strategies for proving convergence.
Also, we have observed in our experiments that, while the new Newton-Kleinman
iteration outperformed all comparing methods (if there were any at all) in
terms of accuracy, it could not compete in the large-scale sparse case with the
computational speed of the Riccati iteration that employed the RADI method
as inner solver.
Therefore, it is in our interest to investigate possible extensions of other,
potentially faster performing methods to the case of general algebraic Riccati equations.
%chktex 17

%%%%%%%%%%%%%%%%%%%%%%%%%%%%%%%%%%%%%%%%%%%%%%%%%%%%%%%%%%%%%%%%%%%%%%%%%%%%%%%%
% *** REFERENCES ***                                                           %
%%%%%%%%%%%%%%%%%%%%%%%%%%%%%%%%%%%%%%%%%%%%%%%%%%%%%%%%%%%%%%%%%%%%%%%%%%%%%%%%

\clearpage%
\addcontentsline{toc}{section}{References}
\bibliographystyle{plainurl}
\bibliography{bibtex/myref}

\end{document}